\DeclareMathOperator{\rank}{rank}
\DeclareMathOperator{\tr}{tr}
\DeclareMathOperator{\stt}{s.t.}
\DeclareMathOperator{\argmin}{argmin}
\DeclareMathOperator{\argmax}{argmax}
\theoremstyle{plain}
\newtheorem{theorem}{Theorem}
\newtheorem{remark}{Remark}
\newtheorem{corollary}{Corollary}
\newcommand{\F}{\mathrm{F}}
\newenvironment{sciabstract}{%
\begin{quote} \bf}
{\end{quote}}
\title{From Generality to Specificity: Prior-Driven Optimal Sparse Transformation in Compressed Sensing}
\author
{Zhihan Zhu,$^{1}$ Yanhao Zhang,$^{1}$ Yong Xia$^{1\ast}$\\
\\
\normalsize{$^{1}$School of Mathematical Sciences, Beihang University}\\
\normalsize{Beijing, 100191, China}\\
\\
\normalsize{$^\ast$Corresponding author. E-mail:  yxia@buaa.edu.cn}
}
\date{}
\begin{document} 


\baselineskip24pt


\maketitle


\begin{sciabstract}
  This paper introduces a new paradigm for sparse transformation: the Prior-to-Posterior Sparse Transform (POST) framework, designed to overcome long-standing limitation on generalization and specificity in classical sparse transforms for compressed sensing. POST systematically unifies the generalization capacity of any existing transform domains with the specificity of reference knowledge, enabling flexible adaptation to diverse signal characteristics. Within this framework, we derive an explicit sparse transform domain termed HOT, which adaptively handles both real and complex-valued signals. We theoretically establish HOT’s sparse representation properties under single and multiple reference settings, demonstrating its ability to preserve generalization while enhancing specificity even under weak reference information. Extensive experiments confirm that HOT delivers substantial meta-gains across audio sensing, 5G channel estimation, and image compression tasks, consistently boosting multiple compressed sensing algorithms under diverse multimodal settings with negligible computational overhead.
\end{sciabstract}


\section{Introduction}

Compressed Sensing (CS), renowned for its powerful theoretical foundations and versatile practice, has been a milestone in 21st-century scientific discoveries \cite{donoho2006compressed}, with applications in (medical) image processing \cite{wang2023memory}, computational biology \cite{dai2008compressive}, wireless communication \cite{nguyen2013compressive}, computer vision \cite{cevher2008compressive} and deep learning \cite{wu2019deep}. The core idea of compressed sensing is to recover a high-dimensional sparse vector by a small number of measurements:
\begin{equation}
	\mathbf{y}=\mathbf{\Phi}\mathbf{x}+\mathbf{n},\label{basis CS}
\end{equation}
where $\mathbf{x}\in \mathbb{C}^{N\times1}$ denotes the sparse vector of interest and $\mathbf{y}\in \mathbb{C}^{M\times1} (N\gg M)$ represents the response vector under the measurement matrix $\mathbf{\Phi}\in \mathbb{C}^{M\times N}$ with additive observation noise $\mathbf{n}\sim \mathcal{CN}(\mathbf{0},\sigma^2 \mathbf{I})$. In practice,  $\mathbf{x}$ itself is often not sparse but exhibits transform sparsity \cite{donoho2006compressed}, meaning it is compressible in some transformation domain $\mathbf{D}\in \mathbb{C}^{N\times N}$. Once $\mathbf{x}=\mathbf{D}\mathbf{w}$ where $\mathbf{w}$ exhibits sparsity, we could rewrite \eqref{basis CS} as follows:
\begin{equation}
	\mathbf{y}=\mathbf{\Phi}\mathbf{D}\mathbf{w}+\mathbf{n}.\label{transform CS}
\end{equation}
Compressed sensing theory states that if $\boldsymbol{\Phi}\mathbf{D}$ satisfies the Restricted Isometry Property (RIP) \cite{candes2006robust} and the sparsity of $\mathbf{w}$ meets certain criteria, then $\mathbf{w}$ (or equivalently $\mathbf{x}$) could be accurately reconstructed. Commonly-used compressed sensing algorithms include Orthogonal Matching Pursuit (OMP) \cite{tropp2007signal}, Basis Pursuit (BP) \cite{chen2001atomic}, Least Absolute Shrinkage and Selection Operator (LASSO) \cite{tibshirani1996regression}, etc.

Transform sparsity plays a crucial role in the successful application of compressed sensing to real-world data, but it often requires extensive effort to construct an appropriate transform domain $\mathbf{D}$. In fact, sparse transform domains had already emerged even before the advent of compressed sensing and were widely used in various image and video compression standards. Classic transform domains include Fourier, Wavelet, Cosine, etc.

The continuous form of the Fourier transform was developed by French mathematician Joseph Fourier in the early 19th century to solve the heat equation. Discrete Fourier Transform (DFT) was formally introduced in 1965 \cite{cooley1965algorithm}, marking a foundational milestone in signal processing and harmonic analysis. The mathematical expression of the Discrete Fourier Transform is as follows:
\begin{equation}
	\mathbf{D}_{DFT, (k,n)} = \frac{1}{\sqrt{N}}e^{-i\frac{2\pi}{N}(k-1)(n-1)},~k,n = 1, \dots, N.
\end{equation}
The Discrete Fourier Transform exhibits excellent energy concentration properties, making it widely applicable in compressed sensing. For example, wireless MIMO channels are compressible in Discrete Fourier Transform domain \cite{oppenheim1999discrete,selesnick2001discrete,bajwa2010compressed}, serving as the cornerstone of channel estimation in modern communications.

Discrete Wavelet Transform (DWT) was developed in 1980s, offering a multiscale alternative to traditional Fourier-based methods. Natural images exhibit sparsity in Discrete Wavelet Transform domain \cite{mallat1999wavelet}, which forms the basis of JPEG-2000 compression standard.

Discrete Cosine Transform (DCT) was proposed in 1974 \cite{ahmed1974discrete} and rapidly gained prominence for its exceptional energy compaction properties, laying the groundwork for multimedia compression standards. Discrete Cosine Transform forms the basis of JPEG compression standard in 1992, and MPEG-1/2/4 compression standard for video compression. Moreover, audio signals have sparse representation in Discrete Cosine Transform domain \cite{stankovic2018analysis}.

The standard forms of DFT, DWT, and DCT are all orthogonal transform domains, which not only are computationally efficient but also facilitate the satisfaction of RIP. Another significant advantage of these classical transform domains lies in their excellent generalization capability. Natural signals such as images, audios or wireless channels, regardless of their specific forms, are always compressible on such transform domains. However, the resulting transform sparsity may still be insufficient. According to compressed sensing theory, successful recovery can become more achievable as the sparsity level after transformation is smaller relative to the number of observations \cite{baraniuk2007compressive}. These classical transforms often struggle to achieve satisfactory sparsity, especially in scenarios with limited observations.

\begin{figure}[h]
	\centering
	\includegraphics[width=1.05\linewidth]{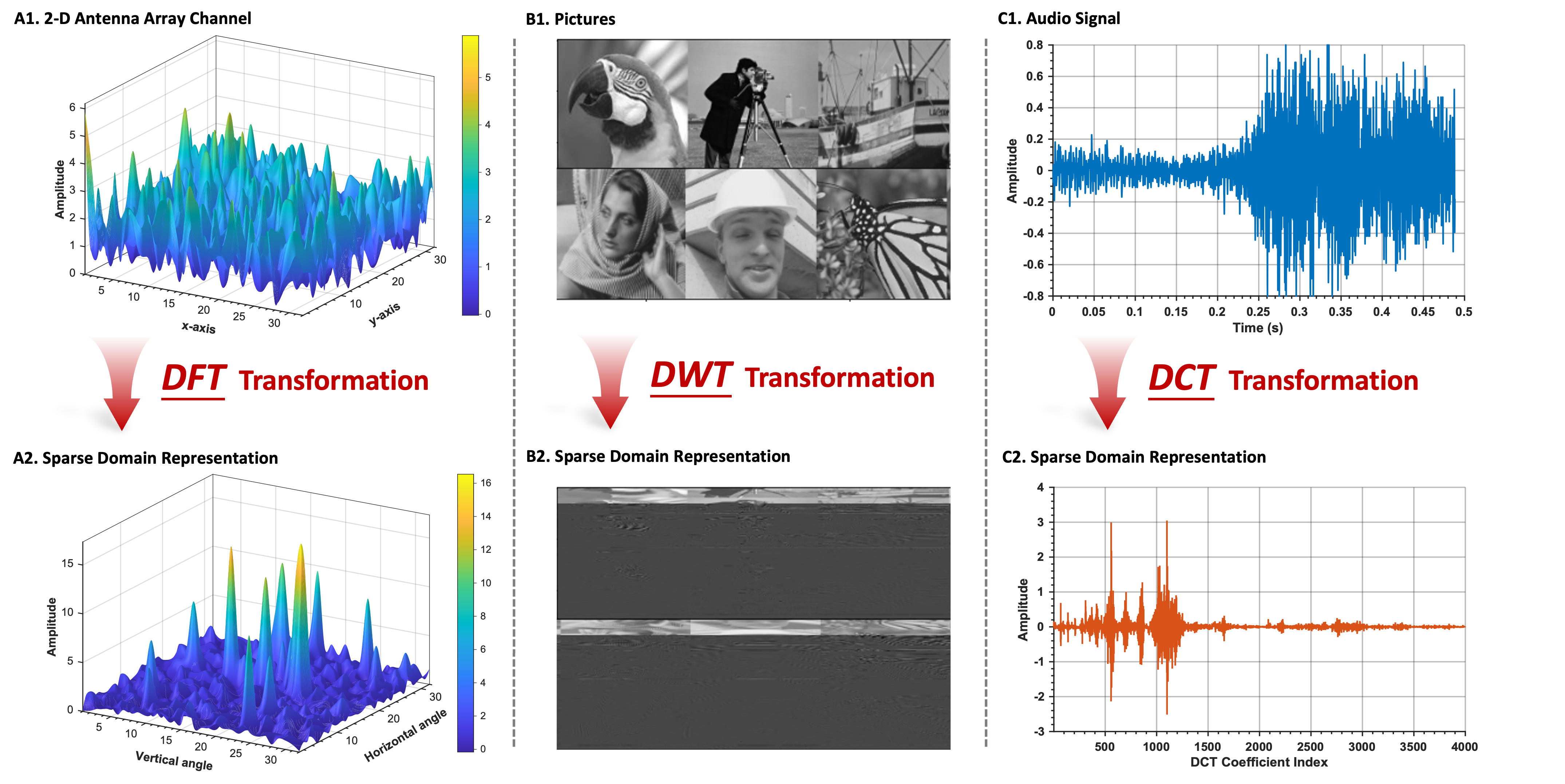}
	\vspace{-2mm}
	\caption{Wireless channel, images and audio signal exhibit sparsity on classic transform domains, yet the transform sparsity may still not be sufficient.}
	\label{fig1}
\end{figure}

We draw inspiration from the celebrated Monty Hall problem, a paradox in probability theory. When all three doors remain closed, each concealing either a goat or a car, we have no reference information—each option appears equally likely. Yet the moment one door is opened to reveal a goat, even though we still don’t know where the car is, the probabilities shift. The act of revealing—even partial information—changes the game. This subtle shift underscores a profound point: in many scientific problems, we do not operate in a realm of omniscience, but neither are we left entirely in the dark. Reference information, however limited, can be crucial.

This lesson extends to the design of classical transform domains. Their generality often comes at the expense of specificity—not because they are poorly constructed, but because they are rooted in the principle of symmetry, making “uniform guesses” in the absence of context. Take, for instance, the Discrete Fourier Transform (DFT) or the Discrete Cosine Transform (DCT). Both satisfy the elegant identity:
\begin{equation}
	\mathbf{D}_{DFT/DCT}^H (1, \dots, 1)^H = \sqrt{N} \mathbf{e}_1,
\end{equation}
where $\mathbf{e}_1$ denotes the first standard basis vector. These transforms are known for their remarkable energy compaction properties: when applied to a uniformly distributed signal, they concentrate all the energy into a single coefficient. In the absence of reference knowledge, such transforms offer optimal sparsity and efficiency—they are the rational default.

But this raises a compelling question: what if we could open a door for these transforms as well? In CS, although $\mathbf{x}$ cannot be completely known, it is always possible to obtain some reference knowledge $\mathbf{r}$ that has a varying degree of correlation with the real signal $\mathbf{x}$. Relying solely on classic transforms may waste this potential information, even when $\mathbf{r}$ is very inaccurate.

\begin{figure}[h]
	\centering
	\includegraphics[width=0.95\linewidth]{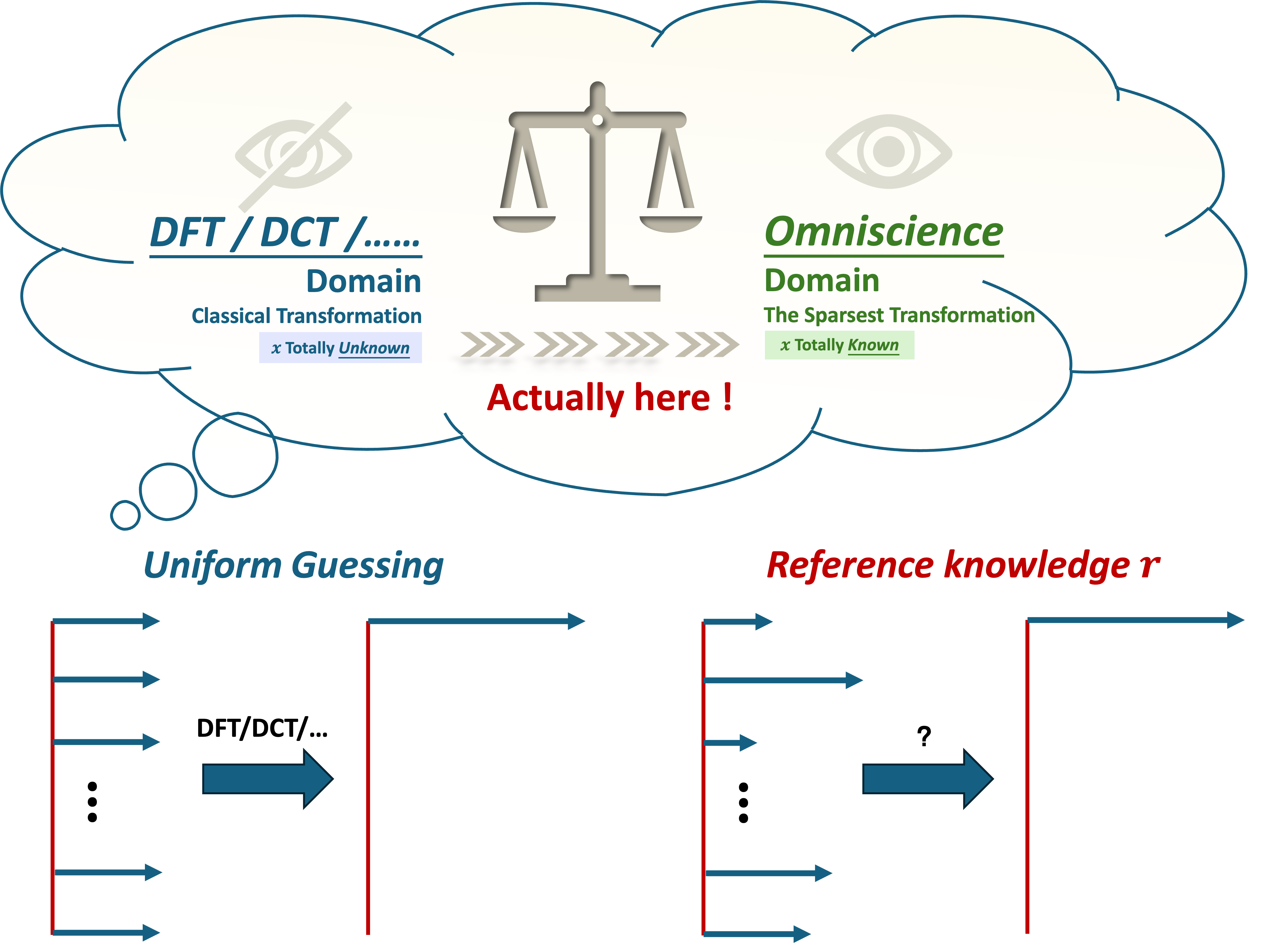}
	\vspace{-2mm}
	\caption{Open the door for classic transform and idea of POST.}
	\label{fig2}
\end{figure}

In light of the above, We hope the optimal transform domain satisfies the following:
\begin{itemize}
	\item Specificity: Such a transform substitute uniform guessing by reference knowledge
	to exhibit optimal sparsity.
	\item Generalizability: The transform still work even when reference knowledge is very inaccurate.
	\item Orthogonality: It is easy to compute and maintain basis incoherence.
\end{itemize}

In this paper, we propose a prior-to-posterior framework for domain transformation, termed Prior-driven Optimal Sparse Transformation (POST), which could be built upon any existing transformation and integrates the generalization of the prior domain with the specificity of reference knowledge. By using the rank function as objective within POST, we derive a novel explicit transformation domain: Householder Optimal Transformation (HOT). We demonstrate that HOT preserves the generalization ability comparable to the prior transform domain, while simultaneously exhibiting optimal specificity in sparse transformation with respect to reference information. Experimental results indicate that HOT provides significant meta-gains across various compressed sensing algorithms in multimodal data and diverse testing scenarios.

\section{Prior-driven Optimal Sparse Transformation}

In this section, we establish the prior-to-posterior domain transformation framework: Prior-driven Optimal Sparse Transformation (POST). Suppose, for the compressed sensing problem \eqref{basis CS}, there exists an orthogonal prior transform domain $\mathbf{D}_{prior}$ (such as DFT) with good generalization properties, and some reference knowledge $\mathbf{r}$ of the true signal $\mathbf{x}$. The POST framework can be represented as the following optimization problem:

\begin{equation}
	\begin{alignedat}{2}
		&\min_{\mathbf{D}_{post}} &&\quad \mathcal{L}\left(\mathbf{D}_{post},\mathbf{D}_{prior}\right)\\
		&\stt &&\quad \mathbf{D}_{post}^H\mathbf{r} = \alpha\mathbf{e}_j\\
		& && \quad\mathbf{D}_{post}^H\mathbf{D}_{post}=\mathbf{I}, 
		\label{POST}
	\end{alignedat}
\end{equation}
where objective function $\mathcal{L}$ quantifies the distance between posterior transform domain $\mathbf{D}_{post}$ and prior transform domain $\mathbf{D}_{prior}$, and $\mathbf{e}_j$ denotes the $j$-th standard basis vector with the index $j$ selected according to an arbitrary rule. In the POST framework, posterior transform domain $\mathbf{D}_{post}$ could share similar generalization properties to prior transform domain $\mathbf{D}_{prior}$ by minimization on the objective function. The first constraint guarantees that $\mathbf{D}_{post}$ exhibits optimal transform sparsity on the reference vector $\mathbf{r}$, while the second constraint ensures the conjugacy between the prior and posterior domains, i.e., orthogonality due to its significant computational efficiency.

The following theorem demonstrates that an appropriate selection on objective function could yield an explicit solution for the posterior transform domain.

\begin{theorem}\label{HOT}
	Choosing $\mathcal{L}\left(\mathbf{D}_{post},\mathbf{D}_{prior}\right)$ as $\rank\left(\mathbf{D}_{post}-\mathbf{D}_{prior}\right)$, then for $\forall \mathbf{r}\in\mathbb{C}^{N\times1}$, the global optimal solution of \eqref{POST} is formulated as 
	
    (a) $\mathbf{r} = \alpha\mathbf{D}_{prior, j}$:
	\begin{equation}
		\mathbf{D}_{post}= \mathbf{D}_{prior},
	\end{equation}
	
	(b) $\mathbf{r} \neq \alpha\mathbf{D}_{prior, j}$:
	\begin{equation}
		\mathbf{D}_{post}= \mathbf{D}_{prior}-\frac{2}{\vert|\mathbf{w}-\alpha\mathbf{e}_j\vert|^2}\left(\mathbf{r}-\alpha\mathbf{D}_{prior, j}\right)\left(\mathbf{w}-\alpha\mathbf{e}_j\right)^H,
		\label{solution1}
	\end{equation}
	where $\mathbf{w} =\mathbf{D}_{prior}^H \mathbf{r}$, $\vert \alpha\vert = \vert| \mathbf{r}\vert|_2$ with $\alpha\mathbf{w}^H\mathbf{e}_j\in\mathbb{R}$, and $\mathbf{D}_{prior, j}$ is the $j$-th column of $\mathbf{D}_{prior}$.  
\end{theorem}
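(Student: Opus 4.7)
The plan is to recognize the candidate matrix in part (b) as the right-multiplication of $\mathbf{D}_{prior}$ by a complex Householder reflector, after which unitarity, the sparsity constraint, and rank-minimality each reduce to standard facts. Using $\mathbf{r}=\mathbf{D}_{prior}\mathbf{w}$ and $\alpha\mathbf{D}_{prior,j}=\mathbf{D}_{prior}(\alpha\mathbf{e}_j)$, the vector $\mathbf{r}-\alpha\mathbf{D}_{prior,j}$ equals $\mathbf{D}_{prior}\mathbf{v}$ with $\mathbf{v}:=\mathbf{w}-\alpha\mathbf{e}_j$, so the candidate factors as $\mathbf{D}_{post}=\mathbf{D}_{prior}\mathbf{H}_{\mathbf{v}}$, where $\mathbf{H}_{\mathbf{v}}:=\mathbf{I}-\frac{2}{\|\mathbf{v}\|^2}\mathbf{v}\mathbf{v}^H$ is Hermitian and involutory, hence unitary.

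Feasibility then splits into two checks. Unitarity of $\mathbf{D}_{post}$ follows at once from $\mathbf{D}_{post}^H\mathbf{D}_{post}=\mathbf{H}_{\mathbf{v}}^H\mathbf{D}_{prior}^H\mathbf{D}_{prior}\mathbf{H}_{\mathbf{v}}=\mathbf{I}$. The sparsity constraint becomes $\mathbf{H}_{\mathbf{v}}\mathbf{w}=\alpha\mathbf{e}_j$, and expanding $\mathbf{H}_{\mathbf{v}}\mathbf{w}=\mathbf{w}-\frac{2\mathbf{v}^H\mathbf{w}}{\|\mathbf{v}\|^2}\mathbf{v}$ shows this is equivalent to the scalar identity $2\mathbf{v}^H\mathbf{w}=\|\mathbf{v}\|^2$. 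I would expand each side in terms of $\|\mathbf{w}\|^2$, $|\alpha|^2$, and $w_j:=\mathbf{e}_j^H\mathbf{w}$ and verify that the difference vanishes under the two hypotheses $\|\mathbf{w}\|^2=|\alpha|^2$ (which is $|\alpha|=\|\mathbf{r}\|_2$ combined with unitarity of $\mathbf{D}_{prior}$) and $\alpha\overline{w_j}=\overline{\alpha}w_j$ (i.e., $\alpha\mathbf{w}^H\mathbf{e}_j\in\mathbb{R}$).

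Optimality is then short. In case (a), the matrix $\mathbf{D}_{prior}$ itself is feasible since $\mathbf{D}_{prior}^H\mathbf{r}=\alpha\mathbf{D}_{prior}^H\mathbf{D}_{prior,j}=\alpha\mathbf{e}_j$, and the objective equals $\rank(\mathbf{0})=0$, trivially a global minimum. In case (b), any competitor with $\rank(\mathbf{D}_{post}-\mathbf{D}_{prior})=0$ would force $\mathbf{D}_{post}=\mathbf{D}_{prior}$ and hence $\mathbf{r}=\alpha\mathbf{D}_{prior,j}$, contradicting the hypothesis; so the minimum rank is at least one, and the rank-one update \eqref{solution1} attains it.

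The only delicate point is the phase-alignment step in the feasibility calculation. A generic complex Householder reflector of the form $\mathbf{I}-\frac{2}{\|\mathbf{v}\|^2}\mathbf{v}\mathbf{v}^H$ built from $\mathbf{v}=\mathbf{w}-\alpha\mathbf{e}_j$ will in general only send $\mathbf{w}$ to some unit-norm-preserving multiple of $\mathbf{e}_j$; the equality $2\mathbf{v}^H\mathbf{w}=\|\mathbf{v}\|^2$ fails by exactly the cross-term $\alpha\overline{w_j}-\overline{\alpha}w_j$, which vanishes precisely when $\alpha\mathbf{w}^H\mathbf{e}_j$ is real. Thus the phase hypothesis on $\alpha$ is not a technical nuisance but the very ingredient that makes the Householder trick close up over $\mathbb{C}$, and isolating this term cleanly is the main obstacle in the proof.
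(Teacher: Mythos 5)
Your proof is correct, and it takes a verification route where the paper takes a derivation route. The paper posits the rank-one ansatz $\mathbf{D}_{post}=\mathbf{D}_{prior}+\mathbf{u}\mathbf{v}^H$, uses the sparsity constraint to pin down the direction of $\mathbf{v}$, uses orthogonality to pin down the direction of $\mathbf{u}$, and then solves for the remaining scalar $k$ by substituting back — so the closed form \emph{emerges} from the constraints, and the Householder structure is only recognized afterwards in Corollary~\ref{House}. You instead start from that Householder factorization $\mathbf{D}_{post}=\mathbf{D}_{prior}\mathbf{H}_{\mathbf{v}}$, check unitarity and the identity $2\mathbf{v}^H\mathbf{w}=\|\mathbf{v}\|^2$ directly (your expansion correctly isolates the residual $(\|\mathbf{w}\|^2-|\alpha|^2)+(\alpha\overline{w_j}-\overline{\alpha}w_j)$, killed by $|\alpha|=\|\mathbf{r}\|_2$ and the phase condition), and then observe that rank $0$ is infeasible in case (b) because $\mathbf{D}_{post}=\mathbf{D}_{prior}$ would force $\mathbf{r}=\alpha\mathbf{D}_{prior,j}$. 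Your version buys an explicit feasibility check — the paper's derivation uses both constraints as working assumptions but never verifies at the end that the resulting matrix is unitary, deferring that to the corollary — and it correctly identifies the reality condition $\alpha\mathbf{w}^H\mathbf{e}_j\in\mathbb{R}$ as the essential ingredient rather than a technicality. What the paper's derivation buys in exchange is an explanation of where the formula comes from and, implicitly, its uniqueness among rank-one corrections, which your argument does not address (nor does the theorem strictly require it).
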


\begin{proof}
	In case (a), prior transformation domain $\mathbf{D}_{prior}$ is directly the global optimal solution, hence only the non-trivial case in (b) need to be considered.
	
	In case (b), prior transformation domain $\mathbf{D}_{prior}$ itself is not a feasible solution, hence we start with minimal rank-one correction $\mathbf{D}_{post}=\mathbf{D}_{prior}+\mathbf{u}\mathbf{v}^H$, where $\mathbf{u},\mathbf{v}\in \mathbb{C}^{N\times1}$.
	
	By the first constraint in \eqref{POST}, we have
	\begin{equation}\label{1}
		\left(\mathbf{D}_{prior}^H+\mathbf{v}\mathbf{u}^H\right)\mathbf{r} = \alpha \mathbf{e}_j.
	\end{equation}
	Hence, 
	\begin{equation}\label{2}
		\mathbf{v}=k_1\left(\mathbf{D}_{prior}^H\mathbf{r}-\alpha \mathbf{e}_j \right),
	\end{equation}
	where $k_1 \in \mathbb{C}$. 
	
	Similarly, according to orthogonality of $\mathbf{D}_{post}$, we could rewrite \eqref{1} as 
	\begin{equation}\label{3}
		\mathbf{r} = \alpha \left(\mathbf{D}_{prior}+\mathbf{u}\mathbf{v}^H\right)\mathbf{e}_j.
	\end{equation}
	Hence, 
	\begin{equation}\label{4}
		\mathbf{u}=k_2\left(\mathbf{r}-\alpha\mathbf{D}_{prior, j} \right),
	\end{equation}
	where $k_2 \in \mathbb{C}$. 
	
	By \eqref{2} and \eqref{4}, we have
	\begin{equation}\label{5}
		\mathbf{D}_{post}=\mathbf{D}_{prior}+k\left(\mathbf{r}-\alpha\mathbf{D}_{prior, j} \right)\left(\mathbf{D}_{prior}^H\mathbf{r}-\alpha \mathbf{e}_j \right)^H,
	\end{equation}
	where $k \in \mathbb{C}$.
	Substituting \eqref{5} into \eqref{3}, we have
	\begin{align*}
		\mathbf{r} &= \alpha \left( \mathbf{D}_{prior}+k\left(\mathbf{r}-\alpha\mathbf{D}_{prior, j} \right)\left(\mathbf{D}_{prior}^H\mathbf{r}-\alpha \mathbf{e}_j \right)^H \right)\mathbf{e}_j\\
		&= \alpha \mathbf{D}_{prior,j} + \alpha k \left(\mathbf{r}-\alpha\mathbf{D}_{prior, j} \right)\left(\mathbf{r}^H\mathbf{D}_{prior, j}-\bar{\alpha} \right),
	\end{align*}
	where $\bar{\alpha}$ serves as the conjugate of $\alpha$. Hence, 
	\begin{equation*}
		\left(1-\alpha k \left(\mathbf{r}^H\mathbf{D}_{prior, j}-\bar{\alpha} \right) \right)\left(\mathbf{r}-\alpha\mathbf{D}_{prior, j} \right)=0.
	\end{equation*}
	Since $\mathbf{r} \neq \alpha\mathbf{D}_{prior, j}$ in case (b), we have
	\begin{equation*}
		1-\alpha k \left(\mathbf{r}^H\mathbf{D}_{prior, j}-\bar{\alpha} \right) =0.
	\end{equation*}
	 Hence, 
	 \begin{equation}
	 	k=\frac{1}{\alpha\mathbf{r}^H\mathbf{D}_{prior, j}-\vert| \mathbf{r}\vert|^2}.
	 \end{equation}
	 Since $\alpha\mathbf{r}^H\mathbf{D}_{prior, j} = \alpha\mathbf{w}^H\mathbf{e}_j\in\mathbb{R}$, $k\in\mathbb{R}$. Noticing that
	  \begin{equation*}
	 	\alpha\mathbf{r}^H\mathbf{D}_{prior, j}-\vert| \mathbf{r}\vert|^2=-\frac{1}{2}\vert| \mathbf{w}-\alpha\mathbf{e}_j\vert|^2,
	 \end{equation*}
	 we have
	 \begin{equation*}
	 	\mathbf{D}_{post}= \mathbf{D}_{prior}-\frac{2}{\vert|\mathbf{w}-\alpha\mathbf{e}_j\vert|^2}\left(\mathbf{r}-\alpha\mathbf{D}_{prior, j}\right)\left(\mathbf{w}-\alpha\mathbf{e}_j\right)^H.
	 \end{equation*}
	 The proof is complete.
\end{proof}

Theorem \ref{HOT} indicates that when selecting the rank as objective function in POST, the global optimal solution can be attained with at most a rank-one modification. Moreover, \eqref{solution1} can be transformed equivalently into an interesting form, as discussed in Corollary \ref{House}.

\begin{corollary}\label{House}
	Denote $\mathbf{v}=\frac{\mathbf{w}-\alpha\mathbf{e}_j}{\vert|\mathbf{w}-\alpha\mathbf{e}_j\vert|}$, $\mathbf{H}=\mathbf{I}-2\mathbf{v}\mathbf{v}^H$, then \eqref{solution1} becomes 
	\begin{equation}
		\mathbf{D}_{post}=\mathbf{D}_{prior}\left(\mathbf{I}-2\mathbf{v}\mathbf{v}^H\right)=\mathbf{D}_{prior}\mathbf{H}.
		\label{House_equ}
	\end{equation}
	The posterior transformation domain $\mathbf{D}_{post}$ corresponds to a Householder transformation $\mathbf{H}$ applied to the prior transformation domain $\mathbf{D}_{prior}$.
\end{corollary}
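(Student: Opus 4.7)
The plan is to recognize that the rank-one correction in \eqref{solution1} factors cleanly once one uses the orthogonality of $\mathbf{D}_{prior}$. Since $\mathbf{w} = \mathbf{D}_{prior}^H\mathbf{r}$ and $\mathbf{D}_{prior}^H\mathbf{D}_{prior} = \mathbf{I}$, I have the identities $\mathbf{r} = \mathbf{D}_{prior}\mathbf{w}$ and $\mathbf{D}_{prior, j} = \mathbf{D}_{prior}\mathbf{e}_j$. Substituting these into the left factor of the rank-one term gives $\mathbf{r} - \alpha\mathbf{D}_{prior,j} = \mathbf{D}_{prior}(\mathbf{w} - \alpha\mathbf{e}_j)$, which exposes $\mathbf{D}_{prior}$ as a common left factor across both summands in \eqref{solution1}.

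From there, the proof is essentially a one-line factoring. First I would pull $\mathbf{D}_{prior}$ out to obtain
\begin{equation*}
\mathbf{D}_{post} = \mathbf{D}_{prior}\left( \mathbf{I} - \frac{2(\mathbf{w} - \alpha\mathbf{e}_j)(\mathbf{w} - \alpha\mathbf{e}_j)^H}{\|\mathbf{w} - \alpha\mathbf{e}_j\|^2} \right),
\end{equation*}
then absorb the normalization into the vector $\mathbf{v} = (\mathbf{w} - \alpha\mathbf{e}_j)/\|\mathbf{w} - \alpha\mathbf{e}_j\|$, which by construction satisfies $\|\mathbf{v}\| = 1$. The inner matrix is therefore exactly $\mathbf{H} = \mathbf{I} - 2\mathbf{v}\mathbf{v}^H$, a unit Householder reflector, and \eqref{House_equ} follows. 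The Householder interpretation then comes for free: $\mathbf{H}$ is the elementary reflector mapping $\mathbf{w}$ to $\alpha\mathbf{e}_j$, which is exactly what the sparsifying constraint $\mathbf{D}_{post}^H\mathbf{r} = \alpha\mathbf{e}_j$ demands after being pushed through $\mathbf{D}_{prior}$.

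There is no real obstacle; the only points requiring a line of justification are that the denominator $\|\mathbf{w} - \alpha\mathbf{e}_j\|$ is nonzero (which is guaranteed in case (b) of Theorem~\ref{HOT}, since $\mathbf{r} \neq \alpha\mathbf{D}_{prior,j}$ is equivalent to $\mathbf{w} \neq \alpha\mathbf{e}_j$ by orthogonality of $\mathbf{D}_{prior}$) and that $\mathbf{H}$ is indeed unitary, which follows immediately from $\|\mathbf{v}\|=1$. So the corollary is essentially a cosmetic reformulation of \eqref{solution1} that makes its geometric content — a Householder reflection composed with the prior basis — transparent.
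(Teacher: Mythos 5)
Your proof is correct and is essentially the intended argument: the paper states Corollary~\ref{House} without proof, and the factoring you describe --- using $\mathbf{r}-\alpha\mathbf{D}_{prior,j}=\mathbf{D}_{prior}(\mathbf{w}-\alpha\mathbf{e}_j)$ (valid because $\mathbf{D}_{prior}$ is square and unitary, so $\mathbf{D}_{prior}\mathbf{D}_{prior}^H=\mathbf{I}$) and then absorbing the normalization into $\mathbf{v}$ --- is the direct verification the authors rely on. Your side remarks on the nonvanishing denominator in case~(b) and on $\mathbf{H}$ mapping $\mathbf{w}$ to $\alpha\mathbf{e}_j$ (which uses the phase condition $\alpha\mathbf{w}^H\mathbf{e}_j\in\mathbb{R}$ and $|\alpha|=\|\mathbf{r}\|$) are also accurate.
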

In accordance with Corollary \ref{House}, the posterior transform domain obtained from \eqref{solution1} or \eqref{House_equ} is termed as Householder Optimal Transformation (HOT). The naive Householder transformation is a special case of HOT when $\mathbf{D}_{prior}$ is an identity matrix.


\begin{remark}
	The HOT framework achieves domain adaptability, ensuring consistency between transform domains. Specifically, if the prior transform domain $\mathbf{D}_{prior}$ and reference vector $\mathbf{r}$ are real-valued, the posterior transform domain $\mathbf{D}_{post}$ inherently retains real-valued properties, enabling seamless integration into real-number-based signal processing pipelines.
\end{remark}

\section{Theoretical Properties of HOT}

In this section, we demonstrate the theoretical properties of HOT. The following theorem guarantees the similar generalization properties of HOT to the prior transformation domain \(\mathbf{D}_{prior}\).

\begin{theorem}\label{generalization}
	
	The relative error and correlation between the posterior transform domain $\mathbf{D}_{post}$ and the prior transform domain $\mathbf{D}_{prior}$ satisfy:
	\begin{equation}
		\mathcal{E}(\mathbf{D}_{post}, \mathbf{D}_{prior}) \le \frac{2}{\sqrt{N}}, \quad \rho(\mathbf{D}_{post}, \mathbf{D}_{prior}) \geq 1-\frac{2}{N},
	\end{equation}
	where
	\begin{equation}
		\mathcal{E}(\mathbf{D}_{post}, \mathbf{D}_{prior})=\frac{\vert|\mathbf{D}_{post}-\mathbf{D}_{prior}\vert|_\F}{\vert|\mathbf{D}_{prior}\vert|_\F},
	\end{equation}
	and
	\begin{equation}
		\rho(\mathbf{D}_{post}, \mathbf{D}_{prior}) = \frac{1}{N} \sum_{j=1}^{N} \frac{|\mathbf{D}_{post, j}^H\mathbf{D}_{prior, j}|}{\vert|\mathbf{D}_{post, j}\vert|\vert|\mathbf{D}_{prior, j}\vert|}.
	\end{equation}
\end{theorem}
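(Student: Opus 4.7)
The plan is to invoke Corollary~\ref{House} and write $\mathbf{D}_{post} = \mathbf{D}_{prior}\mathbf{H}$ where $\mathbf{H} = \mathbf{I} - 2\mathbf{v}\mathbf{v}^H$ is a unitary Householder reflection and $\mathbf{v}$ is a unit vector. Case (a) of Theorem~\ref{HOT} gives $\mathbf{D}_{post} = \mathbf{D}_{prior}$, for which both inequalities hold trivially, so I focus on case (b). The entire argument will then reduce to exploiting unitary invariance of $\mathbf{D}_{prior}$ and $\mathbf{H}$, together with a single triangle-inequality estimate.

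For the relative error, I would first factor $\mathbf{D}_{post} - \mathbf{D}_{prior} = \mathbf{D}_{prior}(\mathbf{H} - \mathbf{I}) = -2\mathbf{D}_{prior}\mathbf{v}\mathbf{v}^H$, then apply unitary invariance of the Frobenius norm to strip off $\mathbf{D}_{prior}$, obtaining $\|\mathbf{D}_{post} - \mathbf{D}_{prior}\|_\F = 2\|\mathbf{v}\mathbf{v}^H\|_\F$. A quick trace identity gives $\|\mathbf{v}\mathbf{v}^H\|_\F = \|\mathbf{v}\|^2 = 1$, while $\|\mathbf{D}_{prior}\|_\F = \sqrt{N}$ by unitarity of its columns. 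Dividing yields $\mathcal{E}(\mathbf{D}_{post}, \mathbf{D}_{prior}) = 2/\sqrt{N}$, achieving the stated bound with equality.

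For the correlation, I would compute each numerator as $\mathbf{D}_{post,j}^H \mathbf{D}_{prior,j} = \mathbf{e}_j^H \mathbf{H}^H \mathbf{D}_{prior}^H \mathbf{D}_{prior,j} = \mathbf{e}_j^H \mathbf{H}^H \mathbf{e}_j = 1 - 2|v_j|^2$, which is real even in the complex setting because the $(j,j)$-entry of $\mathbf{H}$ is real. The denominators satisfy $\|\mathbf{D}_{post,j}\| = \|\mathbf{D}_{prior,j}\| = 1$ by unitarity of both matrices. Hence $\rho(\mathbf{D}_{post}, \mathbf{D}_{prior}) = \frac{1}{N}\sum_{j=1}^N |1 - 2|v_j|^2|$. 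Since $\|\mathbf{v}\|^2 = 1$, I have $\sum_j (1 - 2|v_j|^2) = N - 2$, and the triangle inequality then gives $\sum_j |1 - 2|v_j|^2| \ge N - 2$ for $N \ge 2$, which yields $\rho \ge 1 - 2/N$.

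I do not anticipate any significant obstacle; once Corollary~\ref{House} is in hand, the proof is essentially unitary-invariance bookkeeping plus one triangle-inequality step. The only modest subtlety is that $1 - 2|v_j|^2$ can change sign across $j$ — the absolute values cannot simply be dropped before summing — which is precisely why the triangle inequality, rather than direct summation, supplies the tight lower bound.
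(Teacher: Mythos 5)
Your proposal is correct and follows essentially the same route as the paper: factor $\mathbf{D}_{post}-\mathbf{D}_{prior}=-2\mathbf{D}_{prior}\mathbf{v}\mathbf{v}^H$ and use unitary invariance for the Frobenius bound, then observe that the diagonal entries $\mathbf{D}_{post,j}^H\mathbf{D}_{prior,j}=(\mathbf{I}-2\mathbf{v}\mathbf{v}^H)_{jj}$ are real and sum to $N-2$, so that $\sum_j|\cdot|\ge N-2$. The only cosmetic difference is that the paper bounds $\sum_j|a_j|\ge\sum_j a_j$ via the trace rather than your equivalent triangle-inequality phrasing.
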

\begin{proof}
	In Case (a) of Theorem \ref{HOT}, the posterior transform domain $\mathbf{D}_{post}$ and prior transform domain $\mathbf{D}_{prior}$ achieve a relative error of 0 and a perfect correlation of 1. Therefore, only the non-trivial Case (b) requires further analysis.
	
	We begin by analyzing the relative error between the posterior transform domain $\mathbf{D}_{post}$ and the prior transform domain $\mathbf{D}_{prior}$ in Case (b). 
	\begin{align*}
		\vert|\mathbf{D}_{post}-\mathbf{D}_{prior}\vert|_\F^2&=	\vert|\mathbf{D}_{prior}\left(\mathbf{I}-2\mathbf{v}\mathbf{v}^H\right)-\mathbf{D}_{prior}\vert|_\F^2 ~~~~({\rm by~ Corollary~\ref{House}})\\
		&=4\vert|\mathbf{D}_{prior}\mathbf{v}\mathbf{v}^H\vert|_\F^2\\
		&=4\tr\left(\left(\mathbf{D}_{prior}\mathbf{v}\mathbf{v}^H \right)^H \left(\mathbf{D}_{prior}\mathbf{v}\mathbf{v}^H \right)\right)\\
		&=4\tr\left(\mathbf{v}\mathbf{v}^H \right) ~~~~({\rm since~ \mathbf{v}~ is~ a~ unit~ vector})\\
		&=4.
	\end{align*}
	Since $	\vert|\mathbf{D}_{prior}\vert|_\F^2=N$, 
	\begin{equation*}
		\mathcal{E}(\mathbf{D}_{post}, \mathbf{D}_{prior}) = \frac{2}{\sqrt{N}}.
	\end{equation*}
	Now we analyze the correlation between the posterior transform domain $\mathbf{D}_{post}$ and the prior transform domain $\mathbf{D}_{prior}$. Noticing that
	\begin{align*}
		\mathbf{D}_{post, j}^H\mathbf{D}_{prior, j} &= \left(\mathbf{D}_{post}^H\mathbf{D}_{prior}\right)_{jj}\\
		&= \left(\left(\mathbf{I}-2\mathbf{v}\mathbf{v}^H\right)\mathbf{D}_{prior}^H\mathbf{D}_{prior}\right)_{jj} ~~~~({\rm by~ Corollary ~\ref{House}})\\
		&= \left(\mathbf{I}-2\mathbf{v}\mathbf{v}^H\right)_{jj},
	\end{align*}
	$\mathbf{D}_{post, j}^H\mathbf{D}_{prior, j} \in \mathbb{R}$. Hence,
	\begin{align*}
		\frac{1}{N} \sum_{j=1}^{N} \frac{|\mathbf{D}_{post, j}^H\mathbf{D}_{prior, j}|}{\vert|\mathbf{D}_{post, j}\vert|\vert|\mathbf{D}_{prior, j}\vert|} &=
			\frac{1}{N} \sum_{j=1}^{N} |\mathbf{D}_{post, j}^H\mathbf{D}_{prior, j}| ~~~~({\rm since~ \vert|\mathbf{D}_{post, j}\vert|=\vert|\mathbf{D}_{prior, j}\vert|=1})\\
			& \geq \frac{1}{N} \sum_{j=1}^{N} \mathbf{D}_{post, j}^H\mathbf{D}_{prior, j}\\
			& = \frac{1}{N} \tr \left(\mathbf{D}_{post}^H\mathbf{D}_{prior}\right)\\
			& = \frac{1}{N} \tr \left(\left(\mathbf{I}-2\mathbf{v}\mathbf{v}^H\right)\mathbf{D}_{prior}^H\mathbf{D}_{prior}\right) ~~~~({\rm by~ Corollary ~\ref{House}})\\
			& = \frac{1}{N} \tr \left(\mathbf{I}-2\mathbf{v}\mathbf{v}^H\right)\\
			& = 1-\frac{2}{N}. ~~~~({\rm since~ \mathbf{v}~ is~ a~ unit~ vector})
	\end{align*}
	Hence, 
	\begin{equation*}
		\rho(\mathbf{D}_{post}, \mathbf{D}_{prior}) \geq 1-\frac{2}{N}.
	\end{equation*}
	The proof is complete.	
\end{proof}

\begin{remark}
	Theorem \ref{generalization} states that in HOT, the relative error between the posterior transform domain $\mathbf{D}_{post}$ and the prior transform domain $\mathbf{D}_{prior}$ asymptotically converges to 0, while their correlation approaches 1 as the dimension $N$ increases. This implies that the posterior domain $\mathbf{D}_{post}$ not only represents a minimal correction of $\mathbf{D}_{prior}$ in the rank sense, but also remains nearly indistinguishable from $\mathbf{D}_{prior}$ under the metrics of relative error and correlation. Consequently, the two domains share nearly identical generalization properties.
\end{remark}

\begin{remark}
	Since the naive Householder transformation can be regarded as the posterior transform domain when the prior transform domain is the identity matrix, it exhibits similar generalization properties to the identity matrix. This also explains why the naive Householder matrix has poor generalization performance, as will be thoroughly demonstrated in Figure \ref{fig4}.
\end{remark}

Define the energy concentration of a vector $\mathbf{a} \in \mathbb{C}^N$ as the ratio of the squared magnitude of its largest component to the total energy (squared $\ell_2$-norm) of the vector:  
\begin{equation*}
	\gamma(\mathbf{a}) = \frac{\max_{1 \leq i \leq N} |a_i|^2}{\|\mathbf{a}\|_2^2}.
\end{equation*}
This metric quantifies how concentrated the energy of $\mathbf{a}$ is in a single component, with $\gamma(\mathbf{a}) \in \left[\frac{1}{N}, 1\right]$. Higher values indicate greater sparsity.

The following theorem reveals the specificity inherent to the posterior transform domain $\mathbf{D}_{post}$ in HOT.

\begin{theorem}\label{specificity} 

	Let $\mathbf{x}$ denote the true signal with sparse representations $\mathbf{w}_{prior}$ and $\mathbf{w}_{post}$ in the prior and posterior transform domains, respectively, i.e.,
	\begin{equation*}
		\mathbf{x} = \mathbf{D}_{prior}\mathbf{w}_{prior} = \mathbf{D}_{post}\mathbf{w}_{post},
	\end{equation*}
	and the correlation between reference knowledge $\mathbf{r}$ and true signal $\mathbf{x}$ is defined as:
	\begin{equation*}
		\rho = \frac{|\mathbf{r}^H\mathbf{x}|}{\vert|\mathbf{r}\vert|\vert|\mathbf{x}\vert|}.
	\end{equation*}
	
	(a) If $\rho \geq \sqrt{\gamma(\mathbf{w}_{prior})}$, then
	\begin{equation}\label{3a}
		\gamma(\mathbf{w}_{post}) \geq \gamma(\mathbf{w}_{prior}).
	\end{equation}
	
	(b) Let $\mathbf{w} = \mathbf{D}_{prior}^H \mathbf{r}$ denote the reference knowledge on the prior transform domain. As long as $\mathbf{w}$ captures partial support information of \(\mathbf{w}_{\text{prior}}\), i.e., $\text{supp}(\mathbf{w}) \subseteq \text{supp}(\mathbf{w}_{prior})$, and $\mathbf{r}^H \mathbf{D}_{prior,j} \neq 0$, then
	\begin{equation}
		\vert|\mathbf{w}_{post}\vert|_0 \le \vert|\mathbf{w}_{prior}\vert|_0.
	\end{equation}
	
	(c) Define prior sparsity odd \footnote{Also known as numerical sparsity in \cite{lopes2013estimating}.} as
	\begin{equation}
		odd = \frac{\vert|\mathbf{w}_{prior}\vert|_1}{\vert|\mathbf{w}_{prior}\vert|_2},
	\end{equation}
	where $1 \le odd \le \sqrt{N}$. We have
	\begin{equation}\label{3b}
		\vert|\mathbf{w}_{post}\vert|_1 \le \vert|\mathbf{w}_{prior}\vert|_1.
	\end{equation}
	when
	
	(i) $1 \le odd \le \sqrt{N-1}$: 
	\begin{equation}\label{a1}
		\rho\geq \frac{odd+\sqrt{(N-1)(N-odd^2)}}{N}.
	\end{equation}
	
	(ii) $\sqrt{N-1} \le odd \le \sqrt{N}$:
	\begin{equation}\label{a2}
		\rho\geq \frac{odd+\sqrt{(N-1)(N-odd^2)}}{N}~~or~~\rho\le \frac{odd-\sqrt{(N-1)(N-odd^2)}}{N}.
	\end{equation}
\end{theorem}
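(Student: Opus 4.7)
The plan is to exploit two consequences of Corollary \ref{House}: first, $\mathbf{D}_{post} = \mathbf{D}_{prior}\mathbf{H}$ with Hermitian $\mathbf{H}$, so that $\mathbf{w}_{post}=\mathbf{H}\mathbf{w}_{prior}$; and second, the constraint $\mathbf{D}_{post}^H\mathbf{r}=\alpha\mathbf{e}_j$ combined with unitarity yields $\mathbf{D}_{post}\mathbf{e}_j=\mathbf{r}/\alpha$. The latter identity implies
\[
|(\mathbf{w}_{post})_j| \;=\; \bigl|(\mathbf{D}_{post}\mathbf{e}_j)^H\mathbf{x}\bigr| \;=\; |\mathbf{r}^H\mathbf{x}|/\|\mathbf{r}\| \;=\; \rho\,\|\mathbf{x}\|,
\]
which is the pivotal formula feeding parts (a) and (c). The degenerate case $\mathbf{r}=\alpha\mathbf{D}_{prior,j}$ of Theorem \ref{HOT} gives $\mathbf{w}_{post}=\mathbf{w}_{prior}$, rendering all three claims immediate, so in the remainder I assume the nontrivial Householder form.

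For part (a), since $\mathbf{D}_{post}$ is unitary I have $\|\mathbf{w}_{post}\|_2=\|\mathbf{x}\|$, hence $\gamma(\mathbf{w}_{post})\ge |(\mathbf{w}_{post})_j|^2/\|\mathbf{w}_{post}\|_2^2 = \rho^2$, and the hypothesis $\rho\ge\sqrt{\gamma(\mathbf{w}_{prior})}$ closes the argument.

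For part (b), I would verify that the Householder vector $\mathbf{v}\propto\mathbf{w}-\alpha\mathbf{e}_j$ is supported inside $T:=\mathrm{supp}(\mathbf{w}_{prior})$. The hypothesis $\mathbf{r}^H\mathbf{D}_{prior,j}\ne 0$ is equivalent to $w_j\ne 0$, which places $j$ in $\mathrm{supp}(\mathbf{w})\subseteq T$; thus both $\mathbf{w}$ and $\alpha\mathbf{e}_j$ live in $T$, and so does $\mathbf{v}$. Writing $\mathbf{w}_{post}=\mathbf{w}_{prior}-2(\mathbf{v}^H\mathbf{w}_{prior})\mathbf{v}$ then exhibits $\mathbf{w}_{post}$ as supported in $T$, giving $\|\mathbf{w}_{post}\|_0\le\|\mathbf{w}_{prior}\|_0$.

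Part (c) is the main obstacle. My strategy is to peel off the $j$-th coordinate and apply Cauchy--Schwarz to the remaining $N-1$ entries: since $|(\mathbf{w}_{post})_j|=\rho\|\mathbf{x}\|$ and $\|\mathbf{w}_{post}\|_2^2-|(\mathbf{w}_{post})_j|^2=(1-\rho^2)\|\mathbf{x}\|^2$, I obtain the upper bound $\|\mathbf{w}_{post}\|_1\le\bigl(\rho+\sqrt{(N-1)(1-\rho^2)}\bigr)\|\mathbf{x}\|$, while $\|\mathbf{w}_{prior}\|_1=odd\cdot\|\mathbf{x}\|$ by definition. The desired $\ell_1$ contraction therefore reduces to the scalar inequality $\rho+\sqrt{(N-1)(1-\rho^2)}\le odd$; since $\rho\le 1\le odd$, squaring is legitimate and produces the quadratic $N\rho^2-2\,odd\cdot\rho+(odd^2-(N-1))\ge 0$ with roots $\rho_{\pm}=(odd\pm\sqrt{(N-1)(N-odd^2)})/N$. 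The delicate step is tracking which roots lie in the admissible range $[0,1]$: a short calculation shows $\rho_-\ge 0$ iff $odd\ge\sqrt{N-1}$, cleanly producing the dichotomy between case (i) (only the branch $\rho\ge\rho_+$ is admissible) and case (ii) (both branches $\rho\ge\rho_+$ and $\rho\le\rho_-$ survive).
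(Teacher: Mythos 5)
Your proposal is correct and follows essentially the same route as the paper: the same pivotal identity $|(\mathbf{w}_{post})_j|=\rho\|\mathbf{x}\|_2$ drives part (a), the same support inclusion $\mathrm{supp}(\mathbf{w}-\alpha\mathbf{e}_j)\subseteq\mathrm{supp}(\mathbf{w}_{prior})$ drives part (b), and part (c) ends at the identical bound $\|\mathbf{w}_{post}\|_1\le\bigl(\rho+\sqrt{(N-1)(1-\rho^2)}\bigr)\|\mathbf{w}_{prior}\|_2$ and the identical quadratic in $\rho$. Your part (c) is in fact marginally tidier: by peeling off coordinate $j$ of $\mathbf{w}_{post}$ directly and applying Cauchy--Schwarz to the remaining entries, you avoid the paper's detour through the orthogonal decomposition of $\mathbf{w}_{prior}$ with respect to $\mathbf{w}$, which implicitly requires verifying that $\mathbf{H}\mathbf{w}_{prior,\perp}$ has vanishing $j$-th component before the $\ell_1$ norm can be split additively.
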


\begin{proof}
	In Case (a) of Theorem \ref{HOT}, since the posterior transform domain $\mathbf{D}_{post}$ coincides with the prior transform domain $\mathbf{D}_{prior}$, Cases (a), (b), and (c) of Theorem \ref{specificity} become trivial. Consequently, it suffices to focus on the non-trivial scenario in Case (b) of Theorem \ref{HOT}.
	
	We begin the proof with Case (a) in Theorem \ref{specificity}. By the constraints in \eqref{POST}, we have
	\begin{equation*}
		\mathbf{r} = \alpha \mathbf{D}_{post,j}.
	\end{equation*}
	Hence, 
	\begin{equation*}
		\left|\mathbf{w}_{post,j}\right| = \left|\mathbf{D}_{post,j}^H\mathbf{x}\right| = \left|\frac{1}{\bar{\alpha}}\mathbf{r}^H\mathbf{x}\right| 
		= \frac{\left|\mathbf{r}^H\mathbf{x}\right|}{\vert| \mathbf{r}\vert|_2}
		= \rho\vert| \mathbf{x}\vert|_2,
	\end{equation*}
	where $\mathbf{w}_{post,j}$ denotes the $j$-th component of the posterior sparse representation vector $\mathbf{w}_{post}$. Hence,
	\begin{equation*}
		\gamma(\mathbf{w}_{post}) \geq \frac{|\mathbf{w}_{post,j}|^2}{\|\mathbf{w}_{post}\|_2^2} 
		= \frac{\rho^2\vert| \mathbf{x}\vert|_2^2}{\|\mathbf{w}_{post}\|_2^2}
		= \rho^2 
		\geq \gamma(\mathbf{w}_{prior}).
	\end{equation*}
	
	We now proceed to the proof of Case (b) in Theorem \ref{specificity}. By \eqref{solution1},
	\begin{align*}
		\mathbf{w}_{post} &= \mathbf{D}_{post}^H\mathbf{x}\\
		&= \left(\mathbf{D}_{prior}^H-\frac{2}{\vert|\mathbf{w}-\alpha\mathbf{e}_j\vert|^2}\left(\mathbf{w}-\alpha\mathbf{e}_j\right)\left(\mathbf{r}-\alpha\mathbf{D}_{prior, j}\right)^H\right)\mathbf{x}\\
		&= \mathbf{w}_{prior} - \frac{2\left(\mathbf{r}-\alpha\mathbf{D}_{prior, j}\right)^H\mathbf{x}}{\vert|\mathbf{w}-\alpha\mathbf{e}_j\vert|^2}\left(\mathbf{w}-\alpha\mathbf{e}_j\right).
	\end{align*}
	Since $\mathbf{r}^H \mathbf{D}_{prior,j} = \mathbf{w}^H\mathbf{e}_{j}\neq 0$, $j \in \text{supp}(\mathbf{w})$. Hence, $\text{supp}(\mathbf{w}-\alpha\mathbf{e}_j) \subseteq \text{supp}(\mathbf{w}) \subseteq \text{supp}(\mathbf{w}_{prior})$. 
	
	Hence, 
	\begin{equation*}
		\vert|\mathbf{w}_{post}\vert|_0 \le \vert|\mathbf{w}_{prior}\vert|_0.
	\end{equation*}
	
	Finally, we establish the proof for Case (c) of Theorem \ref{specificity}. Consider the orthogonal decomposition of $\mathbf{w}_{prior}$ with respect to $\mathbf{w}$, i.e.,
	\begin{equation}\label{ortho}
		\mathbf{w}_{prior} = \frac{\mathbf{w}^H\mathbf{w}_{prior}}{\mathbf{w}^H\mathbf{w}}\mathbf{w}+\mathbf{w}_{prior, \perp},
	\end{equation}
	where $\mathbf{w}_{prior, \perp}$ denote the orthogonal complement of $\mathbf{w}$, satisfying $\mathbf{w}_{prior, \perp}^H\mathbf{w}=0$. Noticing that
	\begin{align*}
		\vert|\mathbf{w}_{prior}\vert|_2^2 &= \left|\frac{\mathbf{w}^H\mathbf{w}_{prior}}{\mathbf{w}^H\mathbf{w}}\right|^2\vert|\mathbf{w}\vert|_2^2 + \vert|\mathbf{w}_{prior,\perp}\vert|_2^2\\
		&= \frac{\left|\mathbf{w}^H\mathbf{w}_{prior}\right|^2}{\vert|\mathbf{w}\vert|_2^2} + \vert|\mathbf{w}_{prior,\perp}\vert|_2^2\\
		&= \frac{\left|\mathbf{r}^H\mathbf{x}\right|^2}{\vert|\mathbf{r}\vert|_2^2} + \vert|\mathbf{w}_{prior,\perp}\vert|_2^2\\
		&= \rho^2 \vert|\mathbf{x}\vert|_2^2+ \vert|\mathbf{w}_{prior,\perp}\vert|_2^2\\
		&= \rho^2 \vert|\mathbf{w}_{prior}\vert|_2^2+ \vert|\mathbf{w}_{prior,\perp}\vert|_2^2,
	\end{align*}
	we have
	\begin{equation}\label{energy}
		\vert|\mathbf{w}_{prior,\perp}\vert|_2 = \sqrt{1-\rho^2}\vert|\mathbf{w}_{prior}\vert|_2.
	\end{equation}.
	
	By Corollary \ref{House},
	\begin{equation*}
		\mathbf{x} = \mathbf{D}_{prior}\mathbf{w}_{prior} = \mathbf{D}_{prior}\mathbf{H}\mathbf{H}\mathbf{w}_{prior} = \mathbf{D}_{post}\mathbf{H}\mathbf{w}_{prior} = \mathbf{D}_{post}\mathbf{w}_{post}.
	\end{equation*}
	Hence, $\mathbf{w}_{post}=\mathbf{H}\mathbf{w}_{prior}$.
	
	Therefore,
	\begin{align*}
		\vert|\mathbf{w}_{post}\vert|_1 &= \vert|\mathbf{H}\mathbf{w}_{prior}\vert|_1\\
		&=\vert|\frac{\mathbf{w}^H\mathbf{w}_{prior}}{\mathbf{w}^H\mathbf{w}}\mathbf{H}\mathbf{w}+\mathbf{H}\mathbf{w}_{prior, \perp}\vert|_1 ~~~~({\rm by~ \eqref{ortho}})\\
		&= \left|\frac{\mathbf{w}^H\mathbf{w}_{prior}}{\mathbf{w}^H\mathbf{w}}\alpha\mathbf{e}_j\right| + \vert|\mathbf{H}\mathbf{w}_{prior, \perp}\vert|_1 ~~~~({\rm by~ Corollary ~\ref{House}})\\
		&= \frac{\left|\mathbf{r}^H\mathbf{x}\right|}{\vert|\mathbf{r}\vert|_2} + \vert|\mathbf{H}\mathbf{w}_{prior, \perp}\vert|_1 \\
		&= \rho \vert|\mathbf{x}\vert|_2 + \vert|\mathbf{H}\mathbf{w}_{prior, \perp}\vert|_1 \\
		&\le \rho \vert|\mathbf{x}\vert|_2 + \sqrt{\left(1-\rho^2\right)\left(N-1\right)}\vert|\mathbf{w}_{prior}\vert|_2 ~~~~({\rm by~ Cauchy's ~Inequality ~and ~\eqref{energy}})\\
		&= \left(\rho + \sqrt{\left(1-\rho^2\right)\left(N-1\right)}\right)\vert|\mathbf{w}_{prior}\vert|_2.
	\end{align*}
	
	When 
	\begin{equation}\label{equa}
		\rho + \sqrt{\left(1-\rho^2\right)\left(N-1\right)} \le \frac{\vert|\mathbf{w}_{prior}\vert|_1}{\vert|\mathbf{w}_{prior}\vert|_2} = odd,
	\end{equation}
	we have $\vert|\mathbf{w}_{post}\vert|_1 \le \vert|\mathbf{w}_{prior}\vert|_1$. \eqref{equa} is equivalent to 
	\begin{equation}\label{equa1}
		N\rho^2 -2~odd~\rho + \left(odd^2-N+1\right) \geq 0, ~~0\le \rho \le 1.
	\end{equation}
	Solving inequality \eqref{equa1} yields the condition \eqref{a1} and \eqref{a2} on $\rho$. The proof is complete.
\end{proof}

\begin{remark}
	Theorem \ref{specificity} establishes that when the reference knowledge $\mathbf{r}$ captures partial information about the true signal $\mathbf{x}$, the sparsity of $\mathbf{x}$ on the posterior transform domain $\mathbf{D}_{post}$—quantified by energy concentration, $\ell_0$-norm, and $\ell_1$-norm—outperforms its sparsity on the prior transform domain $\mathbf{D}_{prior}$. This breakthrough enables compressed sensing algorithms to exceed the phase transition limit traditionally constrained by the prior domain, thereby enhancing signal recovery performance in practical applications.
\end{remark}

\begin{remark}
	Under the strict conditions stated in Theorem \ref{specificity} (a) and (c), inequalities \eqref{3a} and \eqref{3b} are also strictly satisfied, i.e., equality is excluded. This demonstrates that the posterior transform domain $\mathbf{D}_{post}$ exhibits enhanced specificity as the reference knowledge $\mathbf{r}$ becomes more precise, enabling more effective sparse recovery and signal processing.
\end{remark}

\begin{remark}
	In Theorem \ref{specificity} (c), the parameter $odd$ quantifies the sparsity of the representation $\mathbf{w}_{prior}$ on the prior transform domain $\mathbf{D}_{prior}$. A larger $odd$ corresponds to a less sparse $\mathbf{w}_{prior}$.  
	
	(i) For $ 1 \leq odd \leq \sqrt{N-1} $, a smaller $odd$ demands a higher correlation $\rho $ between the reference knowledge $\mathbf{r}$ and the true signal $\mathbf{x}$. Specifically, when $odd = 1$, $\mathbf{w}_{prior}$ achieves best sparsity (sparsity of 1), and the correlation $\rho$ must equal 1 to satisfy \eqref{3b}.  
	
	(ii) For $odd \geq \sqrt{N-1}$, even a small correlation $\rho$ between $\mathbf{r}$ and $\mathbf{x}$ suffices to satisfy \eqref{3b}. This indicates that the reference knowledge $\mathbf{r}$ can provide meaningful information regardless of whether $\rho$ is large or small. 
	
	(iii) When $odd = \sqrt{N}$, the representation $\mathbf{w}_{post}$ on the posterior transform domain $\mathbf{D}_{post}$ is guaranteed to be sparser than $\mathbf{w}_{\text{prior}}$, irrespective of the choice of reference knowledge $\mathbf{r}$.  
\end{remark}

According to Theorems \ref{generalization} and \ref{specificity}, the posterior transform domain in the HOT framework achieves both the generalization and specificity properties of sparse representation simultaneously. As illustrated in Figure \ref{fig3}, panels (a) and (b) show the original image and its representation in the DWT domain, respectively. We then construct two versions of HOT \footnote{The index $j$ is simply chosen as 1.} using different forms of reference information: the column-wise mean of the image and the leading left singular vector associated with its largest singular value. The resulting image representations in the corresponding HOT domains are shown in panels (d) and (e). For a more direct comparison, heatmaps of the DWT (serving as the prior transform domain) and HOT (posterior transform domain) are provided in panels (c) and (f) \footnote{Shown here is the HOT heatmap in (d). The corresponding heatmap in (e) exhibits a similar structure.}, respectively. Although the differences between the prior (DWT) and posterior (HOT) domains appear subtle, the energy compaction achieved in the HOT domain is markedly superior. Remarkably, even a coarse reference—such as the mean of all image columns—suffices to produce a transformation with significantly enhanced sparsity and energy concentration. This observation underscores a central insight: domain specificity need not come at the cost of generalization. Even minor, informed adjustments to a classical (prior) transform can yield substantial representational gains.

\begin{figure}[h]
	\centering
	\includegraphics[width=0.99\linewidth]{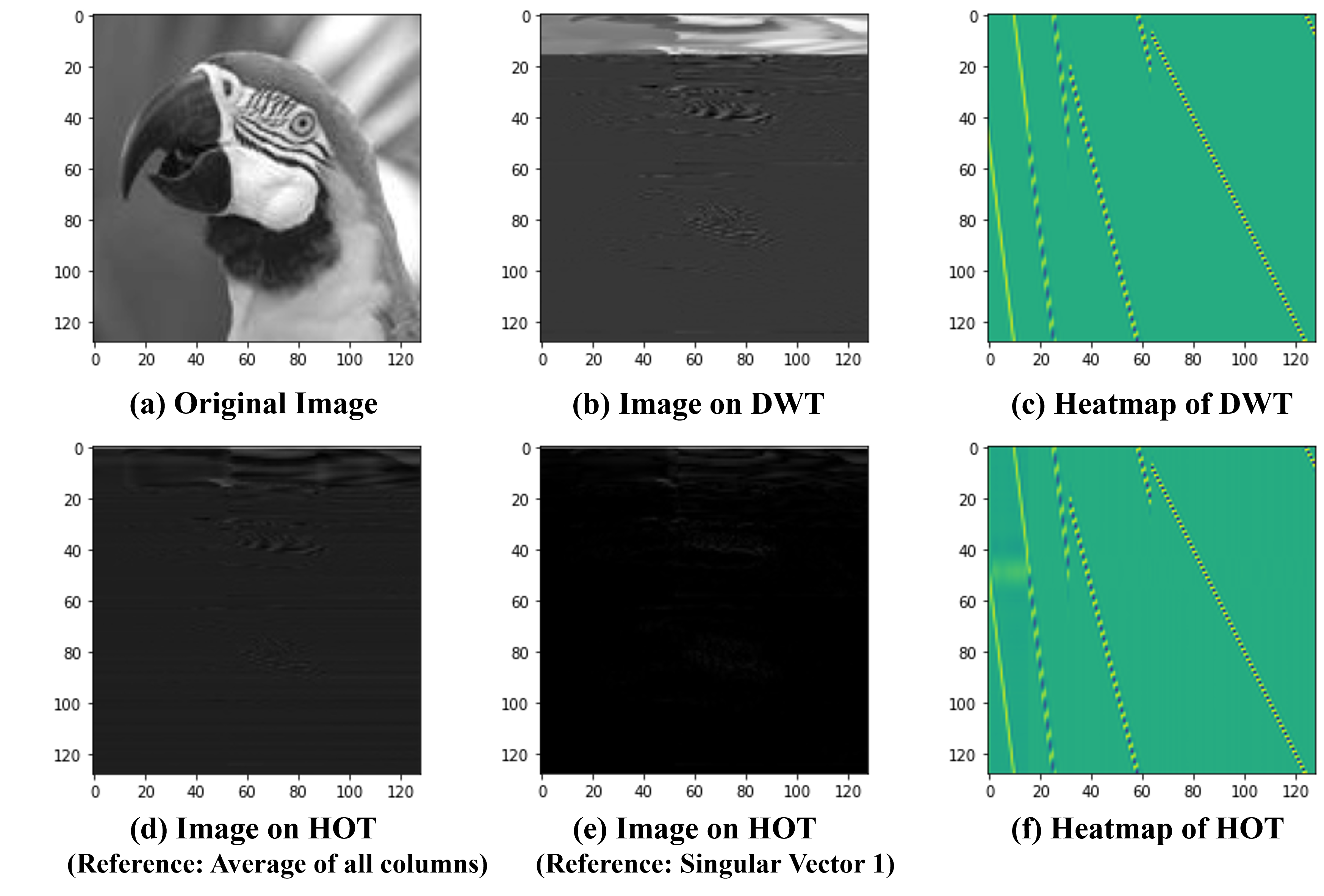}
	\vspace{-2mm}
	\caption{Specificity of HOT.}
	\label{fig3}
\end{figure}

Figure \ref{fig4} illustrates the generalization ability of HOT. In this example, we construct HOT using different prior transform domains while employing a randomly sampled Gaussian vector as reference information (very inaccurate case). Panels (b) and (c) show the resulting image representations in the posterior transform domains: HOT with DWT as the prior, and the other with the identity matrix as the prior—effectively corresponding to a naive Householder transform. The resulting posterior domains retain nearly identical representational structures to their respective priors. As indicated by Theorem \ref{generalization}, the posterior transform inherits the generalization characteristics of its prior, regardless of the reference information. This highlights the critical importance of choosing classical transform domains as priors within the POST framework: even when the reference knowledge is very inaccurate, HOT exhibit strong generalizability.

\begin{figure}[h]
	\centering
	\includegraphics[width=0.99\linewidth]{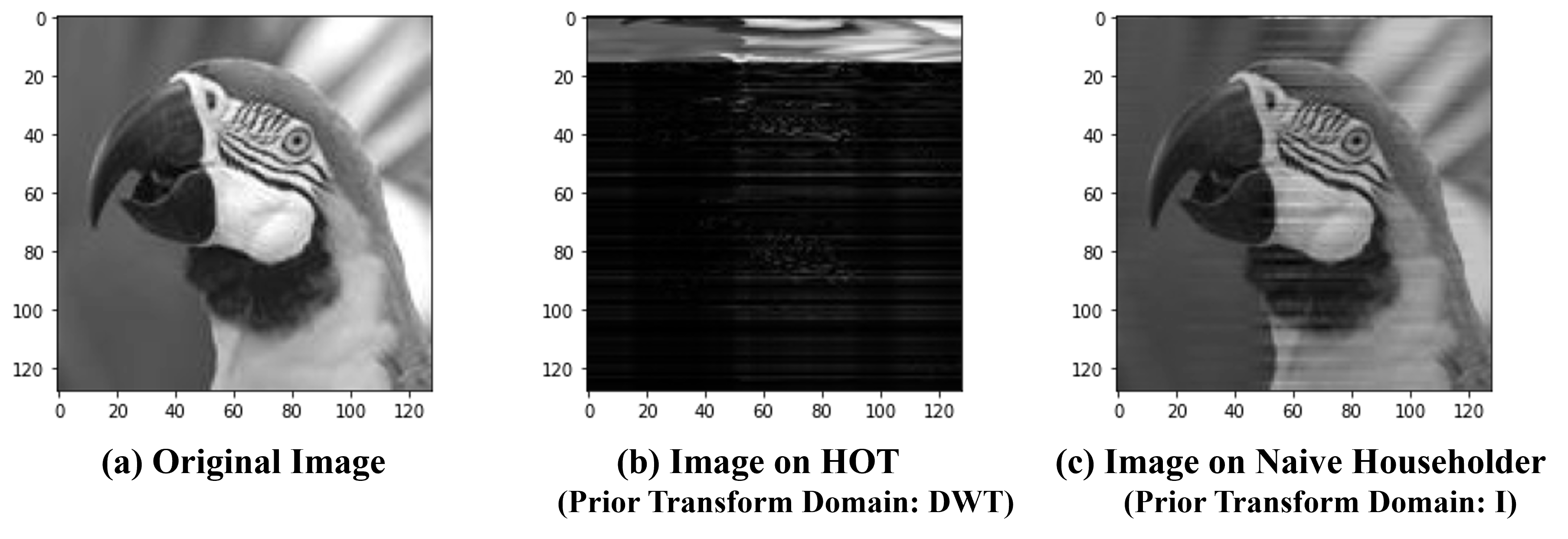}
	\vspace{-2mm}
	\caption{Generalizability of HOT.}
	\label{fig4}
\end{figure}

Another key advantage of HOT lies in the orthogonality of the posterior transform domain, which not only simplifies computational procedures but also guarantees optimal incoherence properties. This implies that as long as the measurement matrix $\mathbf{\Phi}$ satisfies the Restricted Isometry Property (RIP), all theoretical results for compressed sensing (with the optimal bound) established in \cite{randall2009sparse} are inherently satisfied.  

Specifically, when the measurement matrix $\mathbf{\Phi}$ is a random Gaussian matrix, the corresponding sensing matrix $\mathbf{\Phi}\mathbf{D}_{post}$ also retains the independent and identically distributed (i.i.d.) random Gaussian structure. This arises since the rows of $\mathbf{\Phi}\mathbf{D}_{post}$ remain independent, with each row following the distribution  $\mathcal{CN}(0, \mathbf{D}_{post}^H \mathbf{D}_{post}) = \mathcal{CN}(0, \mathbf{I})$. Consequently, all elements in $\mathbf{\Phi}\mathbf{D}_{post}$ are i.i.d. Gaussian. As a result, the sensing matrix $\mathbf{\Phi}\mathbf{D}_{post}$ satisfies the RIP with high probability, thereby perfectly preserving the theoretical guarantees of compressed sensing, as detailed in \cite{baraniuk2007compressive,candes2011compressed,randall2009sparse}.  

\section{Choice of $j$ in HOT}
In this section, we discuss the selection of the appropriate index $j$ within the HOT framework. The objective is to select the index $j$ such that the objective function $\mathcal{L}\left(\mathbf{D}_{post},\mathbf{D}_{prior}\right)$ and the relative error $\mathcal{E}(\mathbf{D}_{post}, \mathbf{D}_{prior})$ are jointly minimized. This leads to the following theorem.

\begin{theorem}\label{selection} 
	Let $\mathbf{D}_{prior}$ be the prior transform domain and $\mathbf{r}$ the reference knowledge. The optimal index $j^*$, which minimizes both the objective function $\mathcal{L}\left(\mathbf{D}_{post},\mathbf{D}_{prior}\right)$ and the relative error $\mathcal{E}(\mathbf{D}_{post}, \mathbf{D}_{prior})$, satisfies the following conditions:
	\begin{equation}\label{corr}
		j^* = \argmax_j \left|\mathbf{r}^H\mathbf{D}_{prior,j}\right|.
	\end{equation}
\end{theorem}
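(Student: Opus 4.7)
The plan is to observe that Theorems \ref{HOT} and \ref{generalization} reduce the joint optimization over $j$ to a binary outcome: either the chosen $j$ triggers Case (a) of Theorem \ref{HOT}, producing $\mathcal{L} = \rank(\mathbf{D}_{post} - \mathbf{D}_{prior}) = 0$ and $\mathcal{E} = 0$ simultaneously, or it falls into Case (b), which yields the fixed values $\mathcal{L} = 1$ and $\mathcal{E} = 2/\sqrt{N}$ independently of $j$ (the latter being the equality case already exhibited in the proof of Theorem \ref{generalization}). Hence the argmax criterion is essentially a rule that selects Case (a) whenever Case (a) is reachable, and otherwise falls into a regime where all indices are equivalent.

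First I would characterize exactly when Case (a) is triggered at a given index $j$. The condition $\mathbf{r} = \alpha\mathbf{D}_{prior, j}$ combined with the side constraints $|\alpha| = \|\mathbf{r}\|_2$ and $\|\mathbf{D}_{prior, j}\|_2 = 1$ is equivalent to $|\mathbf{r}^H\mathbf{D}_{prior, j}| = \|\mathbf{r}\|_2$. By Cauchy--Schwarz one has $|\mathbf{r}^H\mathbf{D}_{prior, j}| \leq \|\mathbf{r}\|_2$ for every $j$, with equality if and only if $\mathbf{r}$ is parallel to $\mathbf{D}_{prior, j}$. Consequently, Case (a) is reachable at index $j$ precisely when $j = \argmax_{j} |\mathbf{r}^H\mathbf{D}_{prior, j}|$, in which case both $\mathcal{L}$ and $\mathcal{E}$ vanish.

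Next, for the generic situation where $\mathbf{r}$ is not parallel to any column of $\mathbf{D}_{prior}$, every index $j$ forces Case (b) and the two metrics take their constant values. The argmax rule is still a global minimizer (tied with every other choice), and admits the natural geometric interpretation that under the phase normalization of Theorem \ref{HOT} one has $\|\mathbf{r} - \alpha\mathbf{D}_{prior, j}\|_2^2 = 2\|\mathbf{r}\|_2^2 - 2\|\mathbf{r}\|_2 |\mathbf{r}^H\mathbf{D}_{prior, j}|$, so maximizing $|\mathbf{r}^H\mathbf{D}_{prior, j}|$ selects the column of $\mathbf{D}_{prior}$ closest to $\mathbf{r}$, i.e., the direction of smallest deviation from triggering the rank-zero Case (a).

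The main obstacle is interpretive rather than computational: within Case (b) the metrics $\mathcal{L}$ and $\mathcal{E}$ are already constant in $j$, so $j^*$ is not a strict optimum in that regime. The coherent reading of the theorem is that $j^*$ attains the strict joint minimum whenever any strict improvement over Case (b) is attainable, and otherwise lies among the (trivially tied) minimizers. Packaging this piecewise-constant structure into the uniform criterion $j^* = \argmax_j |\mathbf{r}^H\mathbf{D}_{prior, j}|$ is the central step that unifies Cases (a) and (b) of Theorem \ref{HOT} into a single selection rule.
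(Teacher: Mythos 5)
Your proposal is correct and follows essentially the same route as the paper: both arguments reduce to observing that the rank and relative error are constant across all $j$ in the generic (Case (b)) regime, that choosing $j$ to minimize $\|\mathbf{r}-\alpha\mathbf{D}_{prior,j}\|_2$ is equivalent to maximizing $|\mathbf{r}^H\mathbf{D}_{prior,j}|$, and that this criterion recovers Case (a) exactly when it is reachable. If anything, your Cauchy--Schwarz characterization of when Case (a) is triggered and your explicit acknowledgment that all indices are tied within Case (b) make the argument slightly more precise than the paper's, which asserts that "different choices of $j$ lead to distinct correction magnitudes" without noting that the two stated metrics themselves do not distinguish indices in that regime.
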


\begin{proof} 
	By Theorems \ref{HOT} and \ref{generalization}, when the posterior transform domain $\mathbf{D}_{post}$ is implemented as the rank-one correction \eqref{solution1} of the prior transform domain $\mathbf{D}_{prior}$, the relative error between the posterior and prior transform domains is $\frac{2}{\sqrt{N}}$. A potential scenario arises when there exists an index $j^*$ such that the reference vector $\mathbf{r} = \alpha \mathbf{D}_{prior,j^*}$. In this case, different choices of $j$ lead to distinct correction magnitudes. To jointly minimize the objective function and relative error, the optimal index $j^*$ should minimize the distance between reference knowledge $\mathbf{r}$ and basis in the prior transform domain $\mathbf{D}_{prior}$, which correspond to the solution of the following optimization problem:  
	\begin{equation}\label{l2}
		j^* = \argmin_j \vert|\mathbf{r}-\alpha\mathbf{D}_{prior,j}\vert|_2,
	\end{equation}
	where $\vert \alpha\vert = \vert| \mathbf{r}\vert|_2$ with $\alpha\mathbf{w}^H\mathbf{e}_j\in\mathbb{R}$ as in Theorem \ref{HOT}. Since \eqref{l2} is equivalent to \eqref{corr}, the proof is complete.
\end{proof}

\begin{remark}
	Criterion \eqref{corr} is equivalent to identifying the column of the prior transform domain $\mathbf{D}_{prior}$ that is maximally correlated with the reference knowledge $\mathbf{r}$. When Case (a) of Theorem \ref{HOT} holds, criterion \eqref{corr} can always identify the index $j^*$ such that $\mathbf{r} = \alpha\mathbf{D}_{prior,j^*}$, thereby eliminating the need for further correction to the prior transform domain.  
\end{remark}

\begin{remark}
	The index $j^*$ selected via criterion \eqref{corr} inherently satisfies one of the two conditions in Case (b) of Theorem \ref{specificity}, specifically $\mathbf{r}^H \mathbf{D}_{prior,j} \neq 0$. This demonstrates that when applying criterion \eqref{corr}, as long as $\mathbf{w}$ captures partial support information of \(\mathbf{w}_{\text{prior}}\), i.e., $\text{supp}(\mathbf{w}) \subseteq \text{supp}(\mathbf{w}_{prior})$, then
	\begin{equation}
		\vert|\mathbf{w}_{post}\vert|_0 \le \vert|\mathbf{w}_{prior}\vert|_0.
	\end{equation} 
\end{remark}

\section{HOT with Multiple Reference Knowledge}
In this section, we investigate HOT to incorporate multiple reference knowledge. In practical scenarios, it is often possible to acquire multiple reference knowledge about the target signals. For instance, when processing matrix or tensor data, we aim for the posterior transform domain to exhibit enhanced specificity across a collection of vectors. These reference knowledge components may include representative samples from the vector collection or subspace information characterizing their intrinsic structure. This motivates the development of HOT with multiple reference knowledge, which systematically integrates diverse information into the posterior transform domain for improved sparse representation.

Assume the multiple reference knowledge $\mathbf{r}_1, \dots, \mathbf{r}_K$ are linearly independent. We aim for the posterior transform domain $\mathbf{D}_{post}$ to maintain generalization akin to the prior transform domain $\mathbf{D}_{prior}$, while simultaneously exhibiting specificity for reference knowledge components. Therefore, the POST framework for multiple reference knowledge can be formulated as:  
\begin{equation}
	\begin{alignedat}{2}
		&\min_{\mathbf{D}_{post}} &&\quad \mathcal{L}\left(\mathbf{D}_{post},\mathbf{D}_{prior}\right)\\
		&\stt &&\quad \vert|\mathbf{D}_{post}^H\mathbf{r}_i\vert|_0 \le i, ~\forall i=1, \dots, K,\\
		& && \quad\mathbf{D}_{post}^H\mathbf{D}_{post}=\mathbf{I}.
		\label{POST1}
	\end{alignedat}
\end{equation}

This POST framework with multiple reference knowledge shares similarities with the formulation in \eqref{POST}, but introduces a critical distinction in its first constraint: we impose the sparsity condition $\|\mathbf{D}_{post}^H \mathbf{r}_i\|_0 \leq i$ for all $i = 1, \dots, K$. This constraint arises from the fact that for a reference matrix $\mathbf{R} = (\mathbf{r}_1, \dots, \mathbf{r}_K)$, the optimal sparsity of $\mathbf{D}_{post}^H \mathbf{R}$, under orthogonality constraints of $\mathbf{D}_{post}$, corresponds to an upper triangular matrix structure. Specifically, when the reference vectors $\{\mathbf{r}_i\}$ are mutually orthogonal, $\mathbf{D}_{post}^H \mathbf{R}$ reduces to a diagonal matrix, achieving the ideal sparsity level $\|\mathbf{D}_{post}^H \mathbf{r}_i\|_0 = 1$ for all $i$. However, such orthogonality represents a special case rather than the general scenario, necessitating the proposed relaxed constraint  $\|\mathbf{D}_{post}^H \mathbf{r}_i\|_0 \leq i$ to accommodate practical non-orthogonal reference configurations.

Similar to Theorem \ref{HOT}, an explicit solution for the posterior transform domain $\mathbf{D}_{post}$ can be derived by appropriately selecting the objective function. We formalize the following theorem.

\begin{theorem}
	Choosing $\mathcal{L}\left(\mathbf{D}_{post},\mathbf{D}_{prior}\right)$ as $\rank\left(\mathbf{D}_{post}-\mathbf{D}_{prior}\right)$, the global optimal solution of \eqref{POST1} corresponds to at most rank $K$ correction of the prior transform domain $\mathbf{D}_{prior}$, formulated as 
	\begin{equation}\label{HOT1}
		\mathbf{D}_{post} = \mathbf{D}_{prior}\mathbf{H}_1\cdots\mathbf{H}_K,
	\end{equation}
	where $\mathbf{H}_i = \mathbf{I} - 2\mathbf{v}_i\mathbf{v}_i^H$ is Householder matrix with $\mathbf{v}_i$ being constructible sequentially, satisfying $\vert|\mathbf{H}_i\cdots\mathbf{H}_1\mathbf{w}_i\vert|_0 \le i$, and $\mathbf{w}_i = \mathbf{D}_{prior}^H \mathbf{r}_i$ for all $i=1,\cdots,K$.
\end{theorem}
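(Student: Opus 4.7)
The plan is to induct on the number of reference vectors $K$, using Theorem~\ref{HOT} as both the base case and the workhorse for each inductive step. The construction produces the Householder reflections in order, with the $k$-th reflection vector $\mathbf{v}_k$ forced to lie in $\mathrm{span}\{\mathbf{e}_k,\ldots,\mathbf{e}_N\}$ so that $\mathbf{H}_k$ fixes the first $k-1$ coordinates; this decoupling is what keeps the sparsity patterns established for earlier references intact under later corrections.

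The base case $K=1$ is precisely Theorem~\ref{HOT}. For the inductive step, assume $\mathbf{H}_1,\ldots,\mathbf{H}_{K-1}$ have been built with $\mathbf{v}_i\in\mathrm{span}\{\mathbf{e}_i,\ldots,\mathbf{e}_N\}$ and $\mathrm{supp}(\mathbf{H}_{i-1}\cdots\mathbf{H}_1\mathbf{w}_i)\subseteq\{1,\ldots,i\}$ for every $i<K$. Set $\mathbf{z}=\mathbf{H}_{K-1}\cdots\mathbf{H}_1\mathbf{w}_K$ and split $\mathbf{z}=\mathbf{z}_A+\mathbf{z}_B$ with $\mathbf{z}_A$ supported on $\{1,\ldots,K-1\}$ and $\mathbf{z}_B$ on $\{K,\ldots,N\}$. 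I would then invoke Theorem~\ref{HOT} on the restricted subspace $\mathrm{span}\{\mathbf{e}_K,\ldots,\mathbf{e}_N\}$, taking the identity as prior, $\mathbf{z}_B$ as reference, and target index $K$; this yields $\mathbf{H}_K=\mathbf{I}-2\mathbf{v}_K\mathbf{v}_K^H$ with $\mathbf{v}_K\perp\mathbf{e}_1,\ldots,\mathbf{e}_{K-1}$ and $\mathbf{H}_K\mathbf{z}_B=\alpha_K\mathbf{e}_K$. Consequently $\mathbf{H}_K\mathbf{z}$ is supported on $\{1,\ldots,K\}$, confirming the $K$-th constraint, while the orthogonality $\mathbf{v}_K\perp\mathbf{e}_\ell$ for $\ell<K$ guarantees $\mathbf{H}_K$ fixes $\mathbf{H}_{K-1}\cdots\mathbf{H}_1\mathbf{w}_i$ for every $i<K$, so the earlier sparsity conditions are preserved.

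The rank bound is the easy half: each $\mathbf{H}_i-\mathbf{I}=-2\mathbf{v}_i\mathbf{v}_i^H$ has rank one, and telescoping the product shows that the column space of $\mathbf{H}_1\cdots\mathbf{H}_K-\mathbf{I}$ lies inside $\mathrm{span}\{\mathbf{v}_1,\ldots,\mathbf{v}_K\}$, giving $\rank(\mathbf{D}_{prior}(\mathbf{H}_1\cdots\mathbf{H}_K-\mathbf{I}))\le K$ after multiplication by the invertible $\mathbf{D}_{prior}$. Orthogonality of $\mathbf{D}_{post}$ is automatic because a product of orthogonal matrices is orthogonal.

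The main obstacle I anticipate is upgrading this feasibility argument to a proof of \emph{global} optimality. The construction exhibits a rank-$\le K$ feasible point, but one still has to rule out that some non-decoupled orthogonal correction satisfies all $K$ sparsity constraints with strictly smaller rank. The natural route is to mimic the uniqueness argument from Theorem~\ref{HOT}: writing any feasible $\mathbf{D}_{post}=\mathbf{D}_{prior}+\mathbf{U}\mathbf{V}^H$, the orthogonality condition together with the $K$ nested sparsity constraints should force the columns of $\mathbf{U}$ and $\mathbf{V}$ into specific dependent forms, and the assumed linear independence of $\mathbf{r}_1,\ldots,\mathbf{r}_K$ then precludes any rank reduction below $K$ in the generic case. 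Making this dimension-counting rigorous, while carefully handling the degenerate subcases where some $\mathbf{r}_i$ already aligns with a column of $\mathbf{D}_{prior}\mathbf{H}_1\cdots\mathbf{H}_{i-1}$ (the analogue of Case~(a) of Theorem~\ref{HOT}, in which the corresponding $\mathbf{H}_i$ degenerates to the identity), is where the technical effort will concentrate.
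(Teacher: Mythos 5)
Your proposal follows essentially the same route as the paper: a sequential Householder construction in which each reflection vector $\mathbf{v}_k$ is supported on the complement of the previously used indices (the paper uses general indices $j_1,\dots,j_K$ in place of your $1,\dots,K$) so that later reflections fix the already-aligned references, followed by the telescoping rank-$\le K$ bound for the product. The global-optimality issue you flag as the main obstacle is not resolved any more rigorously in the paper either---it is dispatched with the same informal induction (``adding one sparsity constraint requires at most one further rank-one correction'') together with the explicit feasible point, so your proposal matches the paper's argument in both structure and level of completeness.
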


\begin{proof}
	In Theorem \ref{HOT}, it has been proven that when $K = 1$, the posterior transform domain $\mathbf{D}_{post}$ is at most rank-one correction of the prior transform domain $\mathbf{D}_{prior}$. Assuming that for the case of $K-1$ reference knowledge components, $\mathbf{D}_{post}$ is at most rank-$(K-1)$ correction of $\mathbf{D}_{prior}$, we now consider the case of $K$ reference knowledge components.  
	
	First, for the first $K-1$ reference knowledge components, the solution $\mathbf{D}_{post}$ satisfying \eqref{POST1} is at most rank-$(K-1)$ correction of $\mathbf{D}_{prior}$. The case of $K$ reference knowledge components is equivalent to adding a new constraint $\|\mathbf{D}_{post}^H \mathbf{r}_K\|_0 \leq K$ to \eqref{POST1}. To satisfy this new constraint, at most rank-one correction must be applied to the posterior transform domain for the first $K-1$ reference knowledge components. Therefore, for the case of $K$ reference knowledge components, the global optimal solution of \eqref{POST1} corresponds to at most rank $K$ correction of the prior transform domain $\mathbf{D}_{prior}$.  
	
	Now, we present the construction of the posterior transformation domain $\mathbf{D}_{post}$ for multiple reference knowledge.
	
	Suppose $\Omega = \{1,\cdots,N\}$ as the complete index set. For a vector $\mathbf{a} \in \mathbb{C}^{N \times 1}$ and an index set $S \subset \Omega$, define $\mathbf{a}_S \in \mathbb{C}^{N \times 1}$ such that $(\mathbf{a}_S)_j = a_j $ if $j \in S$, and $(\mathbf{a}_S)_j = 0$ if $j \in S^c$. $\mathbf{w}_i = \mathbf{D}_{prior}^H \mathbf{r}_i$ represents the reference knowledge on the prior transform domain.  
	  
	First, we construct a Householder matrix $\mathbf{H}_1$ as in Theorem \ref{HOT}, such that $\mathbf{H}_1 \mathbf{w}_1 = \alpha_1 \mathbf{e}_{j_1}$, where $\vert \alpha_1\vert = \vert| \mathbf{r}_1\vert|_2$ with $\alpha_1\mathbf{w}_1^H\mathbf{e}_{j_1}\in\mathbb{R}$. Now, for multiple reference vectors $\{\mathbf{w}_1, \mathbf{w}_2, \dots, \mathbf{w}_K\}$, we iteratively apply Householder transform:  
	
	- Align $\mathbf{w}_1$ to $\alpha_1 \mathbf{e}_{j_1}$ by $\mathbf{H}_1$.  
	
	- Update $\mathbf{w}_2$ to $\mathbf{H}_1\mathbf{w}_2$, then construct Householder transform $\mathbf{H}_2 = \mathbf{I} - 2\mathbf{v}_2\mathbf{v}_2^H$ such that 
	\begin{equation}
		\mathbf{v}_2 = \frac{(\mathbf{H}_1\mathbf{w}_2 - \alpha_2\mathbf{e}_{j_2})_{\Omega \setminus \{j_1\}}}{\vert|(\mathbf{H}_1\mathbf{w}_2 - \alpha_2\mathbf{e}_{j_2})_{\Omega \setminus \{j_1\}}\vert|_2},
	\end{equation} 	
	where $\vert \alpha_2\vert = \vert| (\mathbf{H}_1\mathbf{w}_2)_{\Omega \setminus \{j_1\}}\vert|_2$ with $\alpha_2\mathbf{w}_2^H\mathbf{H}_1^H\mathbf{e}_{j_2}\in\mathbb{R}$. Such an $\mathbf{H}_2$ satisfies:  
	(1) $ \mathbf{H}_2 \mathbf{H}_1 \mathbf{w}_1 = \alpha_1 \mathbf{e}_{j_1} $,  
	(2) $\text{supp}(\mathbf{H}_2 \mathbf{H}_1 \mathbf{w}_2) = \{j_1, j_2\}$.  
	
	- Update $\mathbf{w}_i$ to $\mathbf{H}_{i-1}\cdots\mathbf{H}_1\mathbf{w}_i$, then construct Householder transform $\mathbf{H}_i = \mathbf{I} - 2\mathbf{v}_i\mathbf{v}_i^H$ such that
	\begin{equation}\label{multiHOT}
		\mathbf{v}_i = \frac{(\mathbf{H}_{i-1}\cdots\mathbf{H}_1\mathbf{w}_i - \alpha_i\mathbf{e}_{j_i})_{\Omega \setminus \{j_1, j_2,\cdots j_{i-1}\}}}{\vert|(\mathbf{H}_{i-1}\cdots\mathbf{H}_1\mathbf{w}_i - \alpha_i\mathbf{e}_{j_i})_{\Omega \setminus \{j_1, j_2,\cdots j_{i-1}\}}\vert|_2},
	\end{equation} 	
	where $\vert \alpha_i\vert = \vert|(\mathbf{H}_{i-1}\cdots\mathbf{H}_1\mathbf{w}_i)_{\Omega \setminus \{j_1, j_2,\cdots j_{i-1}\}}\vert|_2$ with $\alpha_i\mathbf{w}_i^H\mathbf{H}_{1}^H\cdots\mathbf{H}_{i-1}^H\mathbf{e}_{j_i}\in\mathbb{R}$. Such an $\mathbf{H}_i$ satisfies: $\text{supp}(\mathbf{H}_i\cdots \mathbf{H}_1 \mathbf{w}_r) = \{j_1, j_2,\cdots j_r\}$ for all $r = 1, 2,\cdots i$. Repeat until all reference knowledge components are processed. 
	
	Finally, we obtain the posterior transform domain $\mathbf{D}_{post}$ as:  
	\begin{equation}
		\mathbf{D}_{post} = \mathbf{D}_{prior}\mathbf{H}_1\cdots\mathbf{H}_K,
	\end{equation}
	where $\mathbf{H}_i = \mathbf{I} - 2\mathbf{v}_i\mathbf{v}_i^H$ and $\mathbf{v}_i$ is define as \eqref{multiHOT}. From the construction process of $\mathbf{H}_i$, it is straightforward to observe that $\vert|\mathbf{D}_{post}^H\mathbf{r}_i\vert|_0 \le i, ~\forall i=1, \dots, K$. And due to the orthogonality of $\mathbf{H}_i$, we have $\mathbf{D}_{post}^H\mathbf{D}_{post}=\mathbf{I}$.
	
	We now prove that the posterior transform domain $\mathbf{D}_{post}$ is at most rank $K$ correction of the prior transform domain $\mathbf{D}_{prior}$.  When $K = 1$, Theorem \ref{HOT} shows that the posterior transformation domain $\mathbf{D}_{post}$ is at most rank-one correction of the prior transformation domain $\mathbf{D}_{prior}$.  
	
	Now, assume that for the case of $K-1$ reference knowledge components, the posterior transform domain \eqref{HOT1} is at most rank-$(K-1)$ modification of $\mathbf{D}_{prior}$. For the case of $K$ reference knowledge components, we have:
	\begin{align*}
		\mathbf{D}_{post} &= \mathbf{D}_{prior}\mathbf{H}_1\cdots\mathbf{H}_K\\
		&= \mathbf{D}_{prior}\mathbf{H}_1\cdots\mathbf{H}_{K-1}\left(\mathbf{I}-2\mathbf{v}_K\mathbf{v}_K^H\right)\\
		&= \underbrace{\mathbf{D}_{prior}\mathbf{H}_1\cdots\mathbf{H}_{K-1}}_{rank\text{-}(K-1) ~correction} - \underbrace{2\mathbf{D}_{prior}\mathbf{H}_1\cdots\mathbf{H}_{K-1}\mathbf{v}_K\mathbf{v}_K^H}_{rank~1},
	\end{align*}  
	which is at most rank $K$ correction of the prior transform domain $\mathbf{D}_{prior}$. The proof is complete.
\end{proof}

\begin{remark}
	The constraint $\|\mathbf{D}_{post}^H\mathbf{r}_i\|_0 \leq i $ in \eqref{POST1} indicates that in scenarios involving multiple reference signals, the posterior transform domain enhances the sparsity of earlier references more significantly. This allows more important reference information to be prioritized by positioning it earlier in the sequence. When the reference signals are mutually orthogonal, the posterior transform domain can achieve optimal sparsity for all references simultaneously, rendering the ordering of references no longer critical.
\end{remark}

\begin{remark}
	Equation \eqref{HOT1} can be interpreted as QR decomposition on the prior transformation domain $\mathbf{D}_{prior}$, implemented via a sequence of HOT to incorporate multi-reference knowledge.
\end{remark}

\begin{remark}\label{selection1}
	The selection of $j_1,\dots,j_K$ can be referred to Theorem \ref{selection}.
\end{remark}

In terms of HOT with multiple reference knowledge, We can similarly establish the generality theorem between the prior transform domain $\mathbf{D}_{prior}$ and posterior transform domain $\mathbf{D}_{post}$.

\begin{theorem}\label{generalization1}
	The relative error and correlation between the posterior transform domain $\mathbf{D}_{post}$ and the prior transform domain $\mathbf{D}_{prior}$ satisfy:
	\begin{equation}
		\mathcal{E}(\mathbf{D}_{post}, \mathbf{D}_{prior}) \le \frac{2\sqrt{K}}{\sqrt{N}}, \quad \rho(\mathbf{D}_{post}, \mathbf{D}_{prior}) \geq 1-\frac{2K}{N},
	\end{equation}
	where
	\begin{equation}
		\mathcal{E}(\mathbf{D}_{post}, \mathbf{D}_{prior})=\frac{\vert|\mathbf{D}_{post}-\mathbf{D}_{prior}\vert|_\F}{\vert|\mathbf{D}_{prior}\vert|_\F},
	\end{equation}
	and
	\begin{equation}
		\rho(\mathbf{D}_{post}, \mathbf{D}_{prior}) = \frac{1}{N} \sum_{j=1}^{N} \frac{|\mathbf{D}_{post, j}^H\mathbf{D}_{prior, j}|}{\vert|\mathbf{D}_{post, j}\vert|\vert|\mathbf{D}_{prior, j}\vert|}.
	\end{equation}
\end{theorem}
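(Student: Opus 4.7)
The plan is to mirror the structure of the single-reference proof (Theorem \ref{generalization}) while leveraging the explicit factorization $\mathbf{D}_{post}=\mathbf{D}_{prior}\mathbf{H}_1\cdots\mathbf{H}_K$ guaranteed by the previous theorem. Since $\mathbf{D}_{prior}$ is unitary, the Frobenius norm is invariant under left multiplication by $\mathbf{D}_{prior}$, so the relative error reduces to controlling $\vert|\mathbf{U}-\mathbf{I}\vert|_\F^2$ where $\mathbf{U}=\mathbf{H}_1\cdots\mathbf{H}_K$. Expanding via the trace and using unitarity of $\mathbf{U}$ gives
\begin{equation*}
\vert|\mathbf{U}-\mathbf{I}\vert|_\F^2 = \tr\bigl((\mathbf{U}-\mathbf{I})^H(\mathbf{U}-\mathbf{I})\bigr) = 2N - 2\,\mathrm{Re}\,\tr(\mathbf{U}),
\end{equation*}
so both bounds are reduced to a single inequality: $\mathrm{Re}\,\tr(\mathbf{H}_1\cdots\mathbf{H}_K)\geq N-2K$.

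I would establish this by induction on $K$. The base case $K=1$ is exactly the computation already carried out in Theorem \ref{generalization}, giving $\tr(\mathbf{H}_1)=N-2\|\mathbf{v}_1\|_2^2=N-2$. For the inductive step, writing $\mathbf{H}_K=\mathbf{I}-2\mathbf{v}_K\mathbf{v}_K^H$ and using the cyclic property of trace,
\begin{equation*}
\tr(\mathbf{H}_1\cdots\mathbf{H}_K) = \tr(\mathbf{H}_1\cdots\mathbf{H}_{K-1}) - 2\,\mathbf{v}_K^H\mathbf{H}_1\cdots\mathbf{H}_{K-1}\mathbf{v}_K.
\end{equation*}
Since $\mathbf{H}_1\cdots\mathbf{H}_{K-1}$ is unitary and $\|\mathbf{v}_K\|_2=1$, Cauchy--Schwarz yields $|\mathbf{v}_K^H\mathbf{H}_1\cdots\mathbf{H}_{K-1}\mathbf{v}_K|\leq 1$, so the real part of the new term decreases by at most $2$, completing the induction. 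This immediately gives $\vert|\mathbf{D}_{post}-\mathbf{D}_{prior}\vert|_\F^2\leq 4K$ and hence the bound $\mathcal{E}(\mathbf{D}_{post},\mathbf{D}_{prior})\leq 2\sqrt{K}/\sqrt{N}$ after dividing by $\vert|\mathbf{D}_{prior}\vert|_\F^2=N$.

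For the correlation bound, I would first observe that $\mathbf{D}_{post}^H\mathbf{D}_{prior}=\mathbf{H}_K\cdots\mathbf{H}_1$ (each Householder is Hermitian, and $\mathbf{D}_{prior}^H\mathbf{D}_{prior}=\mathbf{I}$), so each inner product $\mathbf{D}_{post,j}^H\mathbf{D}_{prior,j}$ equals the $(j,j)$-entry of $\mathbf{H}_K\cdots\mathbf{H}_1$. Unlike the single-reference case these diagonal entries need not be real, so I would pass through the inequality $|a|\geq \mathrm{Re}\,a$:
\begin{equation*}
\sum_{j=1}^N \bigl|(\mathbf{H}_K\cdots\mathbf{H}_1)_{jj}\bigr| \geq \mathrm{Re}\,\tr(\mathbf{H}_K\cdots\mathbf{H}_1) = \mathrm{Re}\,\tr(\mathbf{H}_1\cdots\mathbf{H}_K) \geq N-2K,
\end{equation*}
where the middle equality uses that $\mathbf{H}_K\cdots\mathbf{H}_1=(\mathbf{H}_1\cdots\mathbf{H}_K)^H$. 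Combining with $\vert|\mathbf{D}_{post,j}\vert|=\vert|\mathbf{D}_{prior,j}\vert|=1$ yields $\rho(\mathbf{D}_{post},\mathbf{D}_{prior})\geq 1-2K/N$.

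The main obstacle I anticipate is not any single calculation but the bookkeeping around the fact that individual diagonal entries of a product of Householders are generally complex, which forced me to work with $\mathrm{Re}\,\tr$ rather than $\tr$ directly; once both desired bounds are rewritten in terms of $\mathrm{Re}\,\tr(\mathbf{H}_1\cdots\mathbf{H}_K)$, the inductive Cauchy--Schwarz argument is the single load-bearing step and a clean generalization of the $K=1$ computation in Theorem \ref{generalization}.
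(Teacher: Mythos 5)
Your proof is correct and follows essentially the same route as the paper: both reduce everything to the inductive Cauchy--Schwarz bound $\mathrm{Re}\,\tr(\mathbf{H}_K\cdots\mathbf{H}_1)\geq N-2K$ and then translate it into the Frobenius-norm and correlation statements. The only (harmless, arguably cleaner) difference is in the correlation bound, where you reuse that same inequality via $\sum_j|(\cdot)_{jj}|\geq \mathrm{Re}\,\tr(\cdot)$, whereas the paper runs a second, parallel induction to lower-bound $|\tr(\mathbf{H}_K\cdots\mathbf{H}_1)|$ directly.
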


\begin{proof}
	Denote $\Re(z)$ as the real part of a complex number $z \in \mathbb{C}$. Noticing that
	\begin{align*}
		\Re\left(\tr \left(\mathbf{H}_K\cdots\mathbf{H}_1\right)\right) & = \Re\left(\tr \left(\left(\mathbf{I}-2\mathbf{v}_K\mathbf{v}_K^H\right)\mathbf{H}_{K-1}\cdots\mathbf{H}_1\right)\right)\\
		& = \Re\left(\tr \left(\mathbf{H}_{K-1}\cdots\mathbf{H}_1\right) - 2\tr\left(\mathbf{v}_K\mathbf{v}_K^H\mathbf{H}_{K-1}\cdots\mathbf{H}_1\right)\right)\\
		& = \Re\left(\tr \left(\mathbf{H}_{K-1}\cdots\mathbf{H}_1\right)\right) -  2\Re(\mathbf{v}_K^H\mathbf{H}_{K-1}\cdots\mathbf{H}_1\mathbf{v}_K)\\
		& \geq \Re\left(\tr \left(\mathbf{H}_{K-1}\cdots\mathbf{H}_1\right)\right) -  2 |\mathbf{v}_K^H\mathbf{H}_{K-1}\cdots\mathbf{H}_1\mathbf{v}_K|\\
		& \geq \Re\left(\tr \left(\mathbf{H}_{K-1}\cdots\mathbf{H}_1\right)\right) -  2 ~~~~({\rm by~ Cauchy's ~Inequality})
	\end{align*}
	and 
	\begin{align*}
		\Re(\tr\left(\mathbf{H}_1\right)) &= \Re(\tr\left(\mathbf{I}-2\mathbf{v}_1\mathbf{v}_1^H\right))\\
		&= N - 2,
	\end{align*}
	we have
	\begin{equation}\label{a3}
		\Re\left(\tr \left(\mathbf{H}_K\cdots\mathbf{H}_1\right)\right) \geq N - 2K.
	\end{equation}
	Hence,
	\begin{align*}
		\vert|\mathbf{D}_{post}-\mathbf{D}_{prior}\vert|_\F^2&=	\tr\left(\left(\mathbf{D}_{post}-\mathbf{D}_{prior}\right)^H\left(\mathbf{D}_{post}-\mathbf{D}_{prior}\right)\right)\\
		&= 2N - 2\Re\left(\tr\left(\mathbf{D}_{post}^H\mathbf{D}_{prior}\right)\right)\\
		&= 2N -  2\Re\left(\tr\left(\mathbf{H}_K\cdots\mathbf{H}_1\right)\right)~~~~({\rm by~ \eqref{HOT1}})\\
		&\le 2N - 2\left(N-2K\right)~~~~({\rm by~ \eqref{a3}})\\
		&= 4K.
	\end{align*}
	Since $	\vert|\mathbf{D}_{prior}\vert|_\F^2=N$, 
	\begin{equation*}
		\mathcal{E}(\mathbf{D}_{post}, \mathbf{D}_{prior}) = \frac{2\sqrt{K}}{\sqrt{N}}.
	\end{equation*}
	Now we analyze the correlation between the posterior transform domain $\mathbf{D}_{post}$ and the prior transform domain $\mathbf{D}_{prior}$. Noticing that
	\begin{align*}
		\frac{1}{N} \left|\tr \left(\mathbf{H}_K\cdots\mathbf{H}_1\right)\right| & = \frac{1}{N} \left|\tr \left(\left(\mathbf{I}-2\mathbf{v}_K\mathbf{v}_K^H\right)\mathbf{H}_{K-1}\cdots\mathbf{H}_1\right)\right|\\
		& = \frac{1}{N} \left|\tr \left(\mathbf{H}_{K-1}\cdots\mathbf{H}_1\right) - 2\tr\left(\mathbf{v}_K\mathbf{v}_K^H\mathbf{H}_{K-1}\cdots\mathbf{H}_1\right)\right|\\
		& \geq \frac{1}{N} \left|\tr \left(\mathbf{H}_{K-1}\cdots\mathbf{H}_1\right)\right| -  \frac{2}{N} |\mathbf{v}_K^H\mathbf{H}_{K-1}\cdots\mathbf{H}_1\mathbf{v}_K|\\
		& \geq \frac{1}{N} \left|\tr \left(\mathbf{H}_{K-1}\cdots\mathbf{H}_1\right)\right| -  \frac{2}{N} ~~~~({\rm by~ Cauchy's ~Inequality})
	\end{align*}
	and 
	\begin{align*}
		\frac{1}{N} |\tr\left(\mathbf{H}_1\right)| &= \frac{1}{N} |\tr\left(\mathbf{I}-2\mathbf{v}_1\mathbf{v}_1^H\right)|\\
		 &= 1 - \frac{2}{N},
	\end{align*}
	we have 
	\begin{equation}\label{a4}
		\frac{1}{N} \left|\tr \left(\mathbf{H}_K\cdots\mathbf{H}_1\right)\right| \geq 1 - \frac{2K}{N}.
	\end{equation}
	Hence,
	\begin{align*}
		\frac{1}{N} \sum_{j=1}^{N} \frac{|\mathbf{D}_{post, j}^H\mathbf{D}_{prior, j}|}{\vert|\mathbf{D}_{post, j}\vert|\vert|\mathbf{D}_{prior, j}\vert|} &=
		\frac{1}{N} \sum_{j=1}^{N} |\mathbf{D}_{post, j}^H\mathbf{D}_{prior, j}| ~~~~({\rm since~ \vert|\mathbf{D}_{post, j}\vert|=\vert|\mathbf{D}_{prior, j}\vert|=1})\\
		& \geq \frac{1}{N} \left|\sum_{j=1}^{N} \mathbf{D}_{post, j}^H\mathbf{D}_{prior, j}\right|\\
		& = \frac{1}{N} \left|\tr \left(\mathbf{D}_{post}^H\mathbf{D}_{prior}\right)\right|\\
		& = \frac{1}{N} \left|\tr \left(\left(\mathbf{H}_K\cdots\mathbf{H}_1\right)\mathbf{D}_{prior}^H\mathbf{D}_{prior}\right)\right| ~~~~({\rm by~ \eqref{HOT1}})\\
		& = \frac{1}{N} \left|\tr \left(\mathbf{H}_K\cdots\mathbf{H}_1\right)\right|\\
		& \geq 1 - \frac{2K}{N}. ~~~~({\rm by~ \eqref{a4}})
	\end{align*}
	Hence, 
	\begin{equation*}
		\rho(\mathbf{D}_{post}, \mathbf{D}_{prior}) \geq 1-\frac{2K}{N}.
	\end{equation*}
	The proof is complete.
\end{proof}

\begin{remark}
	Theorem \ref{generalization1} states that, in the scenario of multiple reference knowledge, as long as the number of reference knowledge components $K$ satisfies $K = o(N)$, the relative error between the posterior transform domain $\mathbf{D}_{post}$ and the prior transform domain $\mathbf{D}_{prior}$ asymptotically converges to 0, while their correlation approaches 1 as the dimension $N$ increases. This implies that when the number of reference knowledge components grows sublinearly with the dimension (i.e., $K = o(N)$), the posterior domain $\mathbf{D}_{post}$ not only represents a minimal correction of $\mathbf{D}_{prior}$ in the rank sense, but also remains nearly indistinguishable from $\mathbf{D}_{prior}$ under the metrics of relative error and correlation. Consequently, the prior and posterior transform domains retain similar generalization properties.
\end{remark}


We can similarly establish the specificity theory for the posterior transform domain in the multiple reference knowledge scenario.

Define the $K$-energy concentration of a vector $\mathbf{a} \in \mathbb{C}^N$ as the ratio of the squared $\ell_2$-norm of its $K$ largest-magnitude components to the total energy of the vector:  
\begin{equation*}
	\gamma_K(\mathbf{a}) = \frac{\sum_{i=1}^K |a_{(i)}|^2}{\|\mathbf{a}\|_2^2},
\end{equation*}
where $|a_{(1)}| \geq |a_{(2)}| \geq \cdots \geq |a_{(n)}|$ denote the components of $\mathbf{a}$ ordered by descending magnitude.
This metric quantifies how concentrated the energy of $\mathbf{a}$ is in a $K$-sparse vector, with $\gamma_K(\mathbf{a}) \in \left[\frac{K}{N}, 1\right]$. Higher values indicate greater sparsity.

The following theorem reveals the specificity inherent to the posterior transform domain $\mathbf{D}_{post}$ in HOT.

\begin{theorem}\label{specificity1} 
	
	Let $\mathbf{x}$ denote the true signal with sparse representations $\mathbf{w}_{prior}$ and $\mathbf{w}_{post}$ in the prior and posterior transform domains, respectively, i.e.,
	\begin{equation*}
		\mathbf{x} = \mathbf{D}_{prior}\mathbf{w}_{prior} = \mathbf{D}_{post}\mathbf{w}_{post},
	\end{equation*}
	and the representational fidelity of the true signal $\mathbf{x}$ with respect to the multi-reference knowledge $\mathbf{R}$ is defined as:
	\begin{equation*}
		\rho = \frac{\vert|\mathbf{R}\left(\mathbf{R}^H\mathbf{R}\right)^{-1}\mathbf{R}^H\mathbf{x}\vert|}{\vert|\mathbf{x}\vert|}
	\end{equation*}
	with $\rho \in \left[0, 1\right]$, where a larger $\rho$ indicates better representational fidelity of the true signal.
	
	(a) If $\rho \geq \sqrt{\gamma_K(\mathbf{w}_{prior})}$, then
	\begin{equation}\label{7a}
		\gamma_K(\mathbf{w}_{post}) \geq \gamma_K(\mathbf{w}_{prior}).
	\end{equation}
	
	(b) Let $\mathbf{w}_i = \mathbf{D}_{prior}^H \mathbf{r}_i, ~\forall i = 1,\dots,K$ denote the reference knowledge on the prior transform domain. As long as $\mathbf{w}_i$ captures partial support information of $\mathbf{w}_{prior}$, i.e., $\text{supp}(\mathbf{w}_i) \subseteq \text{supp}(\mathbf{w}_{prior})$, and $\mathbf{r}_i^H \mathbf{D}_{prior,j_i} \neq 0$, then
	\begin{equation}
		\vert|\mathbf{w}_{post}\vert|_0 \le \vert|\mathbf{w}_{prior}\vert|_0.
	\end{equation}
	
	(c) Suppose the the true signal is at least $K$-sparse on the prior transform domain, i.e., $\vert|\mathbf{w}_{prior}\vert|_0 \geq K$. Define prior sparsity odd \footnote{Also known as numerical sparsity in \cite{lopes2013estimating}.} as
	\begin{equation}
		odd = \frac{\vert|\mathbf{w}_{prior}\vert|_1}{\vert|\mathbf{w}_{prior}\vert|_2},
	\end{equation}
	where $\sqrt{K} \le odd \le \sqrt{N}$. We have
	\begin{equation}\label{7b}
		\vert|\mathbf{w}_{post}\vert|_1 \le \vert|\mathbf{w}_{prior}\vert|_1,
	\end{equation}
	when
	
	(i) $\sqrt{K} \le odd \le \sqrt{N-K}$: 
	\begin{equation}\label{7a1}
		\rho\geq \frac{\sqrt{K}odd+\sqrt{(N-K)(N-odd^2)}}{N}.
	\end{equation}
	
	(ii) $\sqrt{N-K} \le odd \le \sqrt{N}$:
	\begin{equation}\label{7a2}
		\rho\geq \frac{\sqrt{K}odd+\sqrt{(N-K)(N-odd^2)}}{N}~~or~~\rho\le \frac{\sqrt{K}odd-\sqrt{(N-K)(N-odd^2)}}{N}.
	\end{equation}
\end{theorem}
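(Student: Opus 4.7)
My plan is to exploit the product structure $\mathbf{D}_{post}=\mathbf{D}_{prior}\mathbf{H}_1\cdots\mathbf{H}_K$ established in \eqref{HOT1}. Writing $\mathbf{H}=\mathbf{H}_K\cdots\mathbf{H}_1$ gives $\mathbf{w}_{post}=\mathbf{H}\mathbf{w}_{prior}$, and the constraints of \eqref{POST1} imply $\mathrm{supp}(\mathbf{H}\mathbf{w}_i)\subseteq\{j_1,\ldots,j_i\}$. Each of parts (a)--(c) will then be a $K$-dimensional analogue of the corresponding part of Theorem~\ref{specificity}, with the single reference direction replaced by the $K$-dimensional subspace $\mathrm{span}\{\mathbf{r}_1,\ldots,\mathbf{r}_K\}$.

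For part (a), I would first show that $\{\mathbf{D}_{post,j_1},\ldots,\mathbf{D}_{post,j_K}\}$ is an orthonormal basis of $\mathrm{span}\{\mathbf{r}_1,\ldots,\mathbf{r}_K\}$: the support constraints combined with $\mathbf{D}_{post}\mathbf{D}_{post}^H=\mathbf{I}$ place each $\mathbf{r}_i$ inside $\mathrm{span}\{\mathbf{D}_{post,j_1},\ldots,\mathbf{D}_{post,j_i}\}$, and linear independence of the $K$ reference vectors forces equality of these $K$-dimensional spans. Consequently $\sum_{s=1}^{K}|\mathbf{w}_{post,j_s}|^2$ coincides with the squared norm of the orthogonal projection $\mathbf{R}(\mathbf{R}^H\mathbf{R})^{-1}\mathbf{R}^H\mathbf{x}$, i.e.\ $\rho^2\|\mathbf{w}_{post}\|_2^2$, so $\gamma_K(\mathbf{w}_{post})\geq\rho^2\geq\gamma_K(\mathbf{w}_{prior})$. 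For part (b), I would induct on $i$ to prove $\mathrm{supp}(\mathbf{v}_i)\subseteq\mathrm{supp}(\mathbf{w}_{prior})$: the assumption $\mathbf{r}_i^H\mathbf{D}_{prior,j_i}\neq 0$ gives $j_i\in\mathrm{supp}(\mathbf{w}_i)\subseteq\mathrm{supp}(\mathbf{w}_{prior})$, and the inductive hypothesis keeps the running vector in \eqref{multiHOT} supported on $\mathrm{supp}(\mathbf{w}_{prior})$. Each $\mathbf{H}_i=\mathbf{I}-2\mathbf{v}_i\mathbf{v}_i^H$ then preserves vectors supported on $\mathrm{supp}(\mathbf{w}_{prior})$, yielding the $\ell_0$ bound.

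For part (c), I would orthogonally decompose $\mathbf{w}_{prior}=\mathbf{w}_{\parallel}+\mathbf{w}_{\perp}$ along $\mathrm{span}\{\mathbf{w}_1,\ldots,\mathbf{w}_K\}$. Since $\mathbf{H}$ is unitary and sends this subspace bijectively onto $\mathrm{span}\{\mathbf{e}_{j_1},\ldots,\mathbf{e}_{j_K}\}$, it maps orthogonal complements to orthogonal complements; therefore $\mathbf{H}\mathbf{w}_{\parallel}$ has at most $K$ nonzero entries (inside $\{j_1,\ldots,j_K\}$) and $\mathbf{H}\mathbf{w}_{\perp}$ has at most $N-K$ nonzero entries (outside $\{j_1,\ldots,j_K\}$), with disjoint supports. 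Transferring the projection through the unitary $\mathbf{D}_{prior}$ gives $\|\mathbf{w}_{\parallel}\|_2=\rho\|\mathbf{w}_{prior}\|_2$ and $\|\mathbf{w}_{\perp}\|_2=\sqrt{1-\rho^2}\|\mathbf{w}_{prior}\|_2$. Summing $\ell_1$-norms on the disjoint blocks and applying Cauchy--Schwarz on each then yields
\begin{equation*}
\|\mathbf{w}_{post}\|_1 \leq \bigl(\sqrt{K}\rho+\sqrt{(N-K)(1-\rho^2)}\bigr)\|\mathbf{w}_{prior}\|_2 .
\end{equation*}

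The main obstacle is converting the inequality $\sqrt{K}\rho+\sqrt{(N-K)(1-\rho^2)}\leq odd$ into the two stated regimes. After checking that squaring is legitimate (since $odd\geq\sqrt{K}\geq\sqrt{K}\rho$), the inequality becomes the quadratic $N\rho^2-2\sqrt{K}\,odd\,\rho+(odd^2-N+K)\geq 0$, whose roots are $\rho_{\pm}=\bigl(\sqrt{K}\,odd\pm\sqrt{(N-K)(N-odd^2)}\bigr)/N$. A Cauchy--Schwarz bound applied to the pair $(\sqrt{K},\sqrt{N-K})$ and $(odd,\sqrt{N-odd^2})$ shows the larger root is at most $1$, while the smaller root is nonnegative exactly when $odd\geq\sqrt{N-K}$. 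This case split produces condition \eqref{7a1} alone for regime (i) and the two-sided condition \eqref{7a2} for regime (ii), completing the argument.
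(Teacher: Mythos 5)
Your proposal is correct and follows essentially the same route as the paper's proof: part (a) via the identification of $\{\mathbf{D}_{post,j_1},\dots,\mathbf{D}_{post,j_K}\}$ with an orthonormal basis of $\mathrm{span}\{\mathbf{r}_1,\dots,\mathbf{r}_K\}$ so that the $T$-restricted energy of $\mathbf{w}_{post}$ equals $\rho^2\|\mathbf{x}\|_2^2$, part (b) by induction on the supports of the $\mathbf{v}_i$, and part (c) by splitting $\|\mathbf{w}_{post}\|_1$ over $\{j_1,\dots,j_K\}$ and its complement, applying Cauchy--Schwarz on each block, and solving the resulting quadratic in $\rho$. Your explicit verification that squaring is legitimate and that the roots lie in $[0,1]$ in the appropriate regimes fills in a step the paper leaves implicit, but it is a detail rather than a different argument.
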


\begin{proof}

	We begin the proof with Case (a) in Theorem \ref{specificity1}. Denote $T = \{j_1,\dots,j_K\}$. By the construction of HOT with multiple reference knowledge in \eqref{multiHOT}, we have $\text{span}\{\mathbf{r}_1,\dots,\mathbf{r}_K\} = \text{span}\{\mathbf{D}_{post, j_1},\dots,\mathbf{D}_{post, j_K}\}$. Denote $\mathbf{w}_{post, T} \in \mathbb{C}^{N\times 1}$ such that $(\mathbf{w}_{post,T})_j = (\mathbf{w}_{post})_j$ if $j \in T$, and $(\mathbf{w}_{post,T})_j = 0$ if $j \in T^c$. Hence,
	\begin{equation}\label{a5}
		\vert|\mathbf{w}_{post,T}\vert|_2 = \vert|\mathbf{D}_{post}\mathbf{w}_{post,T}\vert|_2 = \vert|\mathbf{R}\left(\mathbf{R}^H\mathbf{R}\right)^{-1}\mathbf{R}^H\mathbf{x}\vert|_2 = \rho \vert|\mathbf{x}\vert|_2.
	\end{equation}
	Hence,
	\begin{equation*}
		\gamma_K(\mathbf{w}_{post}) \geq  \frac{\vert|\mathbf{w}_{post,T}\vert|_2^2}{\|\mathbf{w}_{post}\|_2^2} 
		= \frac{\rho^2\vert| \mathbf{x}\vert|_2^2}{\|\mathbf{w}_{post}\|_2^2}
		= \rho^2 
		\geq \gamma_K(\mathbf{w}_{prior}).
	\end{equation*}

	We now proceed to the proof of Case (b) in Theorem \ref{specificity1}. By \eqref{HOT1},
	\begin{align*}
		\mathbf{w}_{post} &= \mathbf{D}_{post}^H\mathbf{x}\\
		&= 
		\mathbf{H}_K\cdots\mathbf{H}_1\mathbf{D}_{prior}^H\mathbf{x}\\
		&= \mathbf{H}_K\cdots\mathbf{H}_1\mathbf{w}_{prior}\\
		&= \left(\mathbf{I}-2\mathbf{v}_K\mathbf{v}_K^H\right)\mathbf{H}_{K-1}\cdots\mathbf{H}_1\mathbf{w}_{prior}\\
		&= \mathbf{H}_{K-1}\cdots\mathbf{H}_1\mathbf{w}_{prior} - 2\left(\mathbf{v}_K^H\mathbf{H}_{K-1}\cdots\mathbf{H}_1\mathbf{w}_{prior}\right)\mathbf{v}_K \\
		&= \cdots \\
		&= \mathbf{w}_{prior} - \sum_{i=1}^{K} 2\left(\mathbf{v}_i^H\mathbf{H}_{i-1}\cdots\mathbf{H}_1\mathbf{w}_{prior}\right)\mathbf{v}_i,
	\end{align*}
	where
	\begin{equation}
		\mathbf{v}_i = \frac{(\mathbf{H}_{i-1}\cdots\mathbf{H}_1\mathbf{w}_i - \alpha_i\mathbf{e}_{j_i})_{\Omega \setminus \{j_1, j_2,\cdots j_{i-1}\}}}{\vert|(\mathbf{H}_{i-1}\cdots\mathbf{H}_1\mathbf{w}_i - \alpha_i\mathbf{e}_{j_i})_{\Omega \setminus \{j_1, j_2,\cdots j_{i-1}\}}\vert|_2}.
	\end{equation} 	
	Since $\mathbf{r}_{i}^H \mathbf{D}_{prior,j_{i}} = \mathbf{w}_{i}^H\mathbf{e}_{j_{i}}\neq 0$, $j_{i} \in \text{supp}(\mathbf{w}_{i}) \subset \text{supp}(\mathbf{w}_{prior})$. Now we prove that $\text{supp}(\mathbf{H}_{r}\cdots\mathbf{H}_1\mathbf{w}_{i}) \subset \text{supp}(\mathbf{w}_{prior})$ for any $r, i = 1,\dots,K$. 
	
	1. When $r=1$, 
	\begin{align*}
		\mathbf{H}_1\mathbf{w}_{i} &= \left(\mathbf{I}-2\mathbf{v}_1\mathbf{v}_1^H\right)\mathbf{w}_{i}\\
		&= \mathbf{w}_{i} - 2\left(\mathbf{v}_1^H\mathbf{w}_{i}\right)\mathbf{v}_1\\
		&= \mathbf{w}_{i} - 2\left(\mathbf{v}_1^H\mathbf{w}_{i}\right)\frac{\mathbf{w}_1 - \alpha_1\mathbf{e}_{j_1}}{\vert|\mathbf{w}_1 - \alpha_1\mathbf{e}_{j_1}\vert|_2},
	\end{align*}
	Since $\text{supp}(\mathbf{w}_{i}) \subset \text{supp}(\mathbf{w}_{prior})$, $\text{supp}(\mathbf{w}_{1}) \subset \text{supp}(\mathbf{w}_{prior})$, $j_1 \in \text{supp}(\mathbf{w}_{prior})$, $\text{supp}(\mathbf{H}_1\mathbf{w}_{i}) \subset \text{supp}(\mathbf{w}_{prior})$ for any $i = 1,\dots,K$.
	
	2. Suppose $\text{supp}(\mathbf{H}_{r-1}\cdots\mathbf{H}_1\mathbf{w}_{i}) \subset \text{supp}(\mathbf{w}_{prior})$ for any $i = 1,\dots,K$, we have
	\begin{align*}
		\mathbf{H}_r\cdots\mathbf{H}_1\mathbf{w}_{i} &= \left(\mathbf{I} - 2\mathbf{v}_r\mathbf{v}_r^H\right)\mathbf{H}_{r-1}\cdots\mathbf{H}_1\mathbf{w}_{i}\\
		&= \mathbf{H}_{r-1}\cdots\mathbf{H}_1\mathbf{w}_{i} - 2\left(\mathbf{v}_r^H\mathbf{H}_{r-1}\cdots\mathbf{H}_1\mathbf{w}_{i}\right)\mathbf{v}_r\\
		&= \mathbf{H}_{r-1}\cdots\mathbf{H}_1\mathbf{w}_{i} - 2\left(\mathbf{v}_r^H\mathbf{H}_{r-1}\cdots\mathbf{H}_1\mathbf{w}_{i}\right)\frac{(\mathbf{H}_{r-1}\cdots\mathbf{H}_1\mathbf{w}_r - \alpha_r\mathbf{e}_{j_r})_{\Omega \setminus \{j_1, j_2,\cdots j_{r-1}\}}}{\vert|(\mathbf{H}_{r-1}\cdots\mathbf{H}_1\mathbf{w}_r - \alpha_r\mathbf{e}_{j_r})_{\Omega \setminus \{j_1, j_2,\cdots j_{r-1}\}}\vert|_2}.
	\end{align*}
	Since $\text{supp}(\mathbf{H}_{r-1}\cdots\mathbf{H}_1\mathbf{w}_{i}) \subset \text{supp}(\mathbf{w}_{prior})$, $\text{supp}(\mathbf{H}_{r-1}\cdots\mathbf{H}_1\mathbf{w}_r) \subset \text{supp}(\mathbf{w}_{prior})$, $j_r \in \text{supp}(\mathbf{w}_{prior})$, $\text{supp}(\mathbf{H}_r\cdots\mathbf{H}_1\mathbf{w}_{i}) \subset \text{supp}(\mathbf{w}_{prior})$ for any $i = 1,\dots,K$.
	
	Hence, $\text{supp}(\mathbf{H}_{r}\cdots\mathbf{H}_1\mathbf{w}_{i}) \subset \text{supp}(\mathbf{w}_{prior})$ for any $r, i = 1,\dots,K$. 
	
	Hence, $\text{supp}(\mathbf{v}_i) \subset \text{supp}(\mathbf{w}_{prior})$ and 
	$\text{supp}(\mathbf{w}_{post}) \subset \text{supp}(\mathbf{w}_{prior})$. 
	
	Hence, 
	\begin{equation*}
		\vert|\mathbf{w}_{post}\vert|_0 \le \vert|\mathbf{w}_{prior}\vert|_0.
	\end{equation*}
	
	Finally, we establish the proof for Case (c) of Theorem \ref{specificity1}.
	\begin{align*}
		\vert|\mathbf{w}_{post}\vert|_1 &= \vert|\mathbf{w}_{post, T}\vert|_1 + \vert|\mathbf{w}_{post, T^c}\vert|_1\\
		&\le \sqrt{K}\rho \vert|\mathbf{x}\vert|_2 + \sqrt{\left(1-\rho^2\right)\left(N-K\right)}\vert|\mathbf{w}_{prior}\vert|_2 ~~~~({\rm by~ Cauchy's ~Inequality~ and ~\eqref{a5}})\\
		&= \left(\sqrt{K}\rho + \sqrt{\left(1-\rho^2\right)\left(N-K\right)}\right)\vert|\mathbf{w}_{prior}\vert|_2.
	\end{align*}
	
	When 
	\begin{equation}\label{equ1}
		\sqrt{K}\rho + \sqrt{\left(1-\rho^2\right)\left(N-K\right)} \le \frac{\vert|\mathbf{w}_{prior}\vert|_1}{\vert|\mathbf{w}_{prior}\vert|_2} = odd,
	\end{equation}
	we have $\vert|\mathbf{w}_{post}\vert|_1 \le \vert|\mathbf{w}_{prior}\vert|_1$. \eqref{equ1} is equivalent to 
	\begin{equation}\label{equ2}
		N\rho^2 -2~odd~\sqrt{K}\rho + \left(odd^2-N+K\right) \geq 0, ~~0\le \rho \le 1.
	\end{equation}
	Solving inequality \eqref{equ2} yields the condition \eqref{7a1} and \eqref{7a2} on $\rho$. The proof is complete.
\end{proof}

\begin{remark}
	Theorem \ref{specificity1} establishes that when the multiple reference knowledge $\mathbf{R}$ captures partial information about the true signal $\mathbf{x}$, the sparsity of $\mathbf{x}$ on the posterior transform domain $\mathbf{D}_{post}$—quantified by energy concentration, $\ell_0$-norm, and $\ell_1$-norm—outperforms its sparsity on the prior transform domain $\mathbf{D}_{prior}$. Theorem \ref{specificity} serves as a special case of Theorem \ref{specificity1} when $K=1$.
\end{remark}

\begin{remark}
	For the posterior transform domain $\mathbf{D}_{post}$ in the HOT framework with multiple reference knowledge, the conclusions in Theorem \ref{specificity} (a) and (c) regarding any individual reference knowledge component remain valid.
\end{remark}

\begin{remark}
	The index $\{j_1,\dots,j_K\}$ selected via Remark \ref{selection1} inherently satisfies one of the two conditions in Case (b) of Theorem \ref{specificity1}, specifically $\mathbf{r}_i^H \mathbf{D}_{prior,j_i} \neq 0$. This demonstrates that when applying Remark \ref{selection1}, as long as $\mathbf{w}_i$ captures partial support information of $\mathbf{w}_{prior}$, i.e., $\text{supp}(\mathbf{w}_i) \subseteq \text{supp}(\mathbf{w}_{prior})$, then
	\begin{equation}
		\vert|\mathbf{w}_{post}\vert|_0 \le \vert|\mathbf{w}_{prior}\vert|_0.
	\end{equation} 
\end{remark}

\begin{remark}
	The assumption on the true signal in the prior transform domain ($\vert|\mathbf{w}_{prior}\vert|_0 \geq K$ and $odd \geq \sqrt{K}$) stated in Theorem \ref{specificity1} is reasonable. This is because, when processing more than $K$ linearly independent target signals simultaneously, there must exist some signals among them whose $\ell_0$-norm in the prior transform domain exceeds $K$.

	In Theorem \ref{specificity1} (c), the parameter $odd$ quantifies the sparsity of the representation $\mathbf{w}_{prior}$ on the prior transform domain $\mathbf{D}_{prior}$. A larger $odd$ corresponds to a less sparse $\mathbf{w}_{prior}$.  
	
	(i) For $ \sqrt{K} \leq odd \leq \sqrt{N-K} $, a smaller $odd$ demands a higher representation fidelity $\rho $ of the true signal $\mathbf{x}$ with respect to the multiple reference knowledge $\mathbf{R}$. Specifically, when $odd = \sqrt{K}$, $\mathbf{w}_{prior}$ achieves best sparsity (sparsity of $K$), and representation fidelity $\rho $ must equal 1 to satisfy \eqref{7b}.  
	
	(ii) For $odd \geq \sqrt{N-K}$, even a small representation fidelity $\rho$ of the true signal $\mathbf{x}$ with respect to the multiple reference knowledge $\mathbf{R}$ suffices to satisfy \eqref{7b}. This indicates that the multiple reference knowledge $\mathbf{R}$ can provide meaningful information regardless of whether $\rho$ is large or small. 
	
	(iii) When $odd = \sqrt{N}$, the representation $\mathbf{w}_{post}$ on the posterior transform domain $\mathbf{D}_{post}$ is guaranteed to be sparser than $\mathbf{w}_{prior}$, irrespective of the choice of reference knowledge $\mathbf{R}$.  
\end{remark}

Using DWT as the prior, we construct HOT transforms with two reference vectors to demonstrate its performance across multiple images. The transformation results are shown in Figure \ref{fig5}, where the two references are the column-wise mean vectors of four images. Panels (a) and (b) display the heatmaps of the prior DWT domain and the posterior HOT domain, respectively. Panels (c), (d), and (e) show the original images, their representations in the DWT domain, and their counterparts in the HOT domain. Even with such coarse reference information, the HOT domain yields markedly better sparsity and energy compaction than DWT. Meanwhile, the differences between HOT and its prior DWT remain minimal—preserving the generalization capability of the prior. In essence, HOT achieves a minor adjustment yielding outsized benefits, delivering substantial representational gains through only slight, informed modifications to a classical transform.

\begin{figure}[h]
	\centering
	\includegraphics[width=1\linewidth]{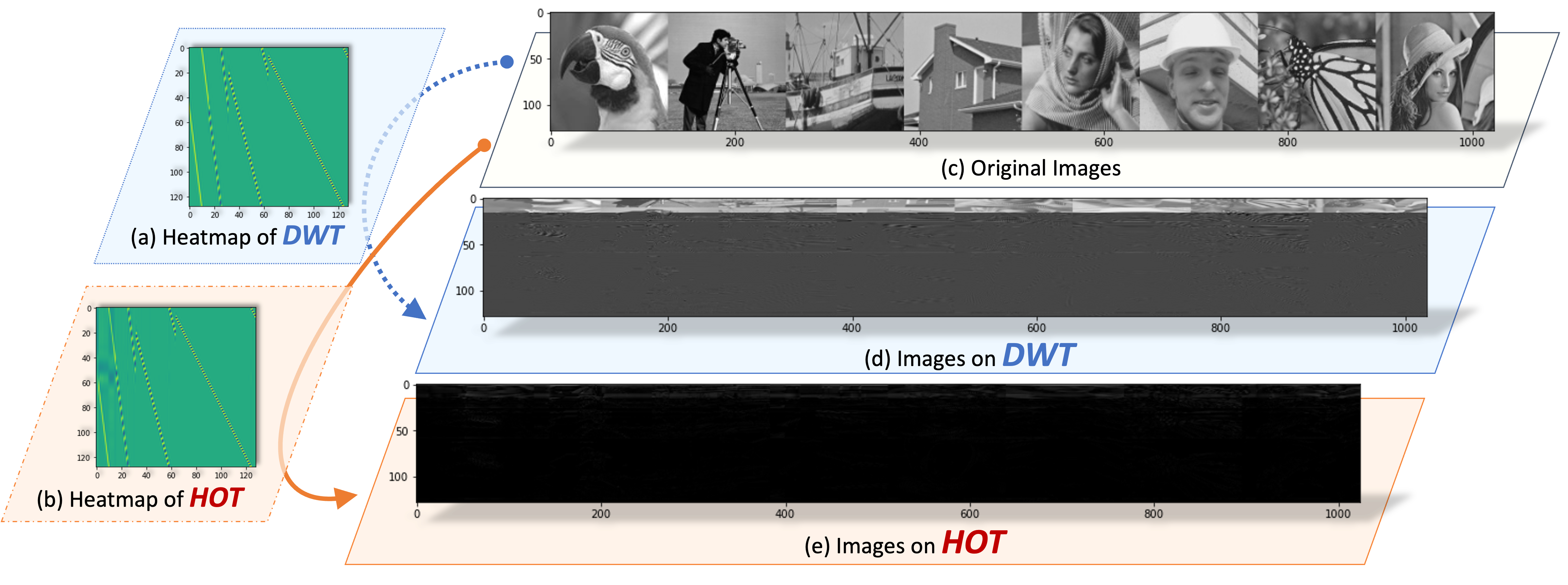}
	\vspace{-2mm}
	\caption{HOT with two reference knowledge (each is average of columns in 4 images).}
	\label{fig5}
\end{figure}

\section{Experiment}

In this section, we conduct extensive evaluations of HOT's effectiveness across diverse tasks, multimodal data, and different algorithms. A natural question arises: what types of reference information can be practically obtained in real-world compressed sensing scenarios? Here we provide three representative examples: (1) a solution generated by a weak CS learner, (2) the solution from previous steps in sequential CS tasks, and (3) an initial coarse estimate of the target. We subsequently demonstrate how these easily accessible references can be leveraged to construct HOT transforms in respective applications.

Specifically, we present results across three distinct real-world tasks—(1) audio sensing, (2) 5G time-varying channel estimation, and (3) image compression—spanning three different data modalities: (1) audio, (2) wireless channel, and (3) image, under three representative compressed sensing algorithms: (1) Orthogonal Matching Pursuit (OMP), (2) Basis Pursuit (BP), and (3) Least Absolute Shrinkage and Selection Operator (LASSO). Across all these tasks, data modalities and algorithms, HOT consistently delivers substantial meta-gains, underscoring its broad applicability and robustness in compressed sensing.

\subsection{Audio Sensing with HOT}
In this subsection, we demonstrate how to construct HOT transform using solution generated by a weak compressed sensing (CS) learner for audio sensing tasks, leading to tangible performance gains. The rationale for using weak CS learners, which are computationally cheap and provide coarse target estimates, lies in their ability to offer reference information at low cost. Despite its potential inaccuracy, this coarse solution can be effectively utilized by the HOT transform to produce significant improvements, as previously discussed. Here, We propose two practical schemes:
\begin{itemize}
	\item \textbf{Boosting a Weak CS Learner}: A HOT transform is constructed from solution of a weak CS learner. By repeatedly applying the weak CS algorithm on the HOT domain and constructing new HOT transform, the original weak learner is progressively enhanced.
	\item \textbf{Enhancing a Stronger Solver with Weak Guidance}: Solutions from one weak CS learner are used to build the HOT transform, which is then leveraged by a different and potentially stronger CS solver to elevate its performance ceiling.
\end{itemize}

For the audio sensing task, we employ the classical Discrete Cosine Transform (DCT) as the prior transform domain for HOT. Simultaneously, the classic Orthogonal Matching Pursuit (OMP) algorithm, which is constrained to identify only 3-4 basis elements, serves as the weak CS learner. In the experiments, audio clips of varying lengths ($N$ ranging from 500 to 700) were randomly selected from dataset \cite{gemmeke2017audio}, while the number of observations $M$ was fixed at 90. The experimental results are presented in Figures \ref{fig6} and \ref{fig7}.
\begin{figure}[h]
	\centering
	\includegraphics[width=1\linewidth]{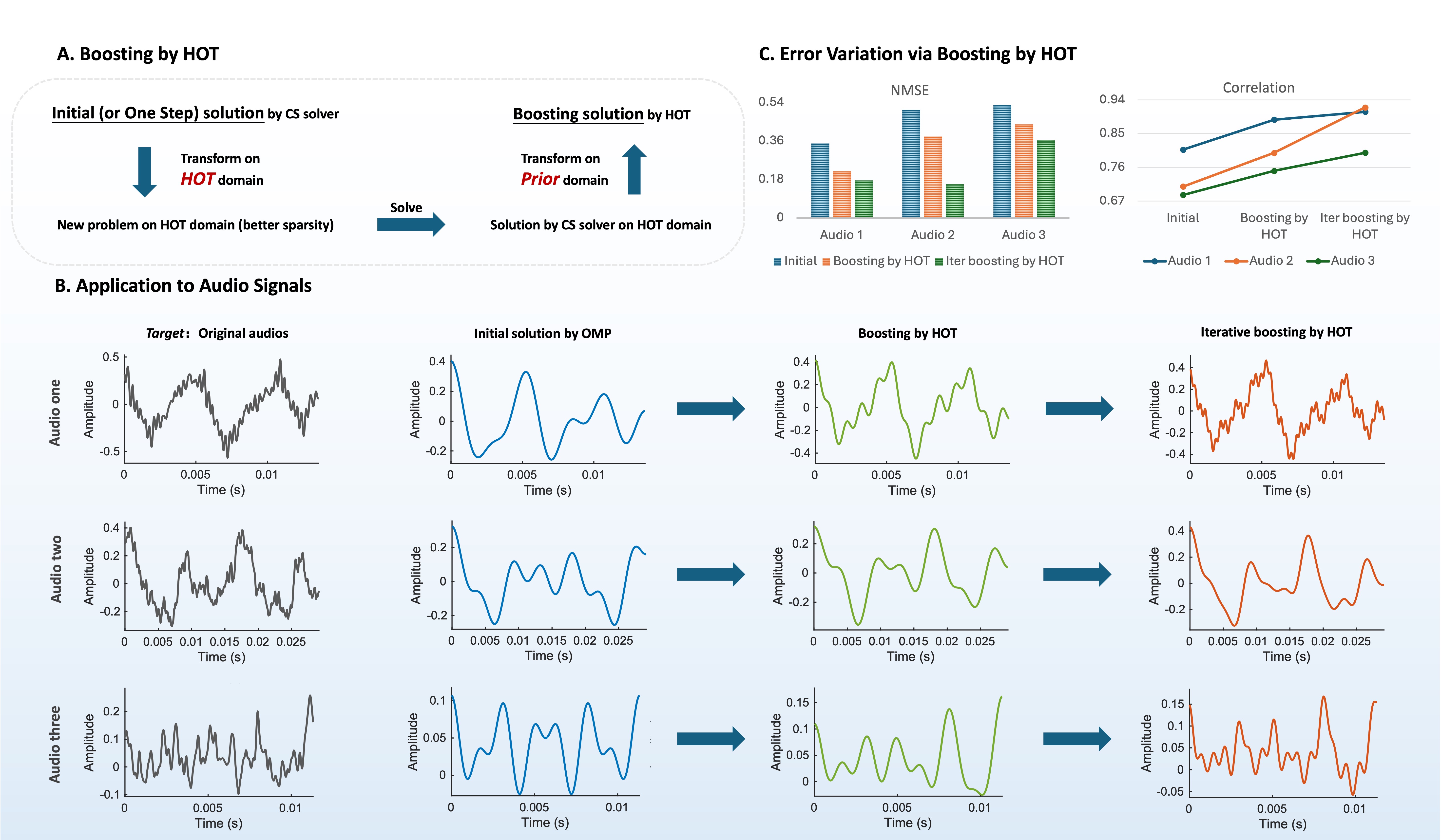}
	\vspace{-2mm}
	\caption{Boosting Weak Compressed Sensing Learners with HOT}
	\label{fig6}
\end{figure}

As presented in Figure \ref{fig6}, panel A outlines the framework for boosting a weak compressed sensing learner with HOT. Panel B displays the audio signals reconstructed by the weak CS learner after each round of HOT-based boosting, while panel C shows the corresponding NMSE ($\vert|\hat{\mathbf{x}}-\mathbf{x}^*\vert|^2/\vert|\mathbf{x}^*\vert|^2$) and correlation ($|\hat{\mathbf{x}}^H\mathbf{x}^*|/\vert|\hat{\mathbf{x}}\vert|\vert|\mathbf{x}^*\vert|$) evolution between the solutions generated by the boosted learner and the ground-truth audio. The figure reveals the evolutionary improvement of the weak CS learner by HOT boosting. Starting from an initial state of capturing only broad trends, the weak CS learner, benefiting from the enhanced sparsity and energy compaction in the HOT domain compared to the DCT domain, progresses to accurately reconstruct major audio components by the second round. As the reference information becomes more precise, the third round yields recovery of fine-grained details. After HOT boosting, the final recovery error of the weak CS learner achieves 50\% to 70\% reduction compared to its initial level.

\begin{figure}[h]
	\centering
	\includegraphics[width=1\linewidth]{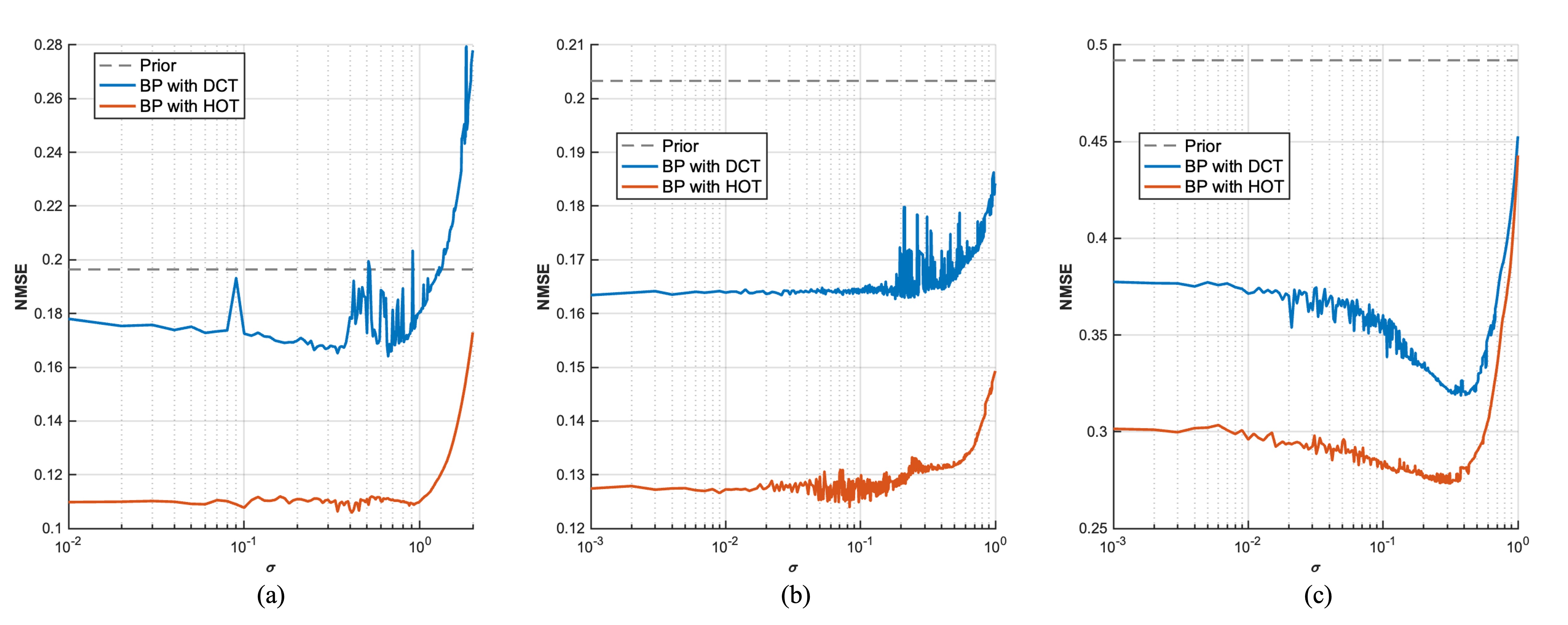}
	\vspace{-3mm}
	\caption{Enhancing a Stronger Solver with Weak Guidance}
	\label{fig7}
\end{figure}

As shown in Figure \ref{fig7}, we again employ the restricted OMP algorithm as the weak CS learner to construct the HOT transform. The Basis Pursuit (BP) algorithm is then applied for signal recovery in both the HOT and DCT domains. The dashed line (prior) indicates the relative error between the coarse solution from OMP and the ground-truth audio. The red and blue lines represent the recovery errors of BP in the HOT and DCT domains, respectively, plotted against varying hyperparameters of the BP algorithm. The results demonstrate that although the OMP-derived prior is less accurate than BP's direct recovery in the DCT domain, the HOT transform constructed from this reference still substantially elevates the performance ceiling of BP. This enhancement, which is robust to hyperparameter choices and improves both optimal and overall performance, underscores the efficacy of the HOT transform: even a reference that is inferior to the solver's native solution can provide a valuable complementary perspective, thereby pushing the performance limit of the algorithm.

\subsection{5G Channel Estimation with HOT}
In this subsection, we demonstrate the significant performance gains achieved by constructing HOT transforms from historical solutions in sequential compressed sensing task, using 5G time-varying channel estimation as a case study. Channel estimation serves as the cornerstone of modern mobile communications. Due to constrained communication resources, only partial channel observations can be acquired at each time step, necessitating the use of compressed sensing techniques to recover full-dimensional channel state information from limited measurements. We consider the CDL-B channel model from the 3GPP standard, which comprises 23 cluster paths and exhibits rapid time variation\cite{3gpp38901}. As one of the most complicated channel profiles in 5G, CDL-B poses considerable challenges for accurate channel estimation.

For the channel estimation task, we employ the classical Discrete Fourier Transform (DFT) as the prior transform domain for HOT, and capitalizes on the channel estimate from the previous time step as reference information, despite its potential inaccuracy from prior estimation errors and channel dynamics. We show that HOT consistently delivers substantial meta-gains under diverse observation ratios and noise conditions, even when leveraging such imperfect information. The experimental setup is structured as follows:
\begin{itemize}
	\item \textbf{Estimation Target:} The CDL-B time-varying channel is considered, with a focus on a 64-antenna system ($N=64$) and a sequence of 200 consecutive channel estimation instances ($T=200$).
	\item \textbf{Baseline Algorithm:} The Basis Pursuit (BP) method, a simple and fast compressed sensing algorithm, is employed as the baseline solver.
	\item \textbf{Measurement Ratio per Instance:} Comprehensive tests are conducted for a range of measurement ratios ($M/N$) from 0.25 to 0.75.
	\item \textbf{Signal-to-Noise Ratio (SNR):} The performance is systematically evaluated under various SNR conditions, ranging from 15 dB to 40 dB.
	\item \textbf{Evaluation Metrics:} We compare the performance of the BP algorithm operating in the DFT domain versus the HOT domain using three key metrics: TNMSE (Temporal Normalized Mean Square Error), TCorr (Temporal Correlation), and the total computational CPU time. The definitions of TNMSE and TCorr are given by:
	\begin{equation*}
		\text{TNMSE} = \frac{1}{T} \sum_{t=1}^{T} \frac{\vert|\hat{\mathbf{x}}_t-\mathbf{x}_t^*\vert|^2}{\vert|\mathbf{x}_t^*\vert|^2},\quad \text{TCorr} = \frac{1}{T} \sum_{t=1}^{T}\frac{|\hat{\mathbf{x}}_t^H\mathbf{x}_t^*|}{\vert|\hat{\mathbf{x}}_t\vert|\vert|\mathbf{x}_t^*\vert|}. 
	\end{equation*}
\end{itemize}
The experimental results are presented in Figure \ref{fig8}.
\begin{figure}[h]
	\centering
	\includegraphics[width=1\linewidth]{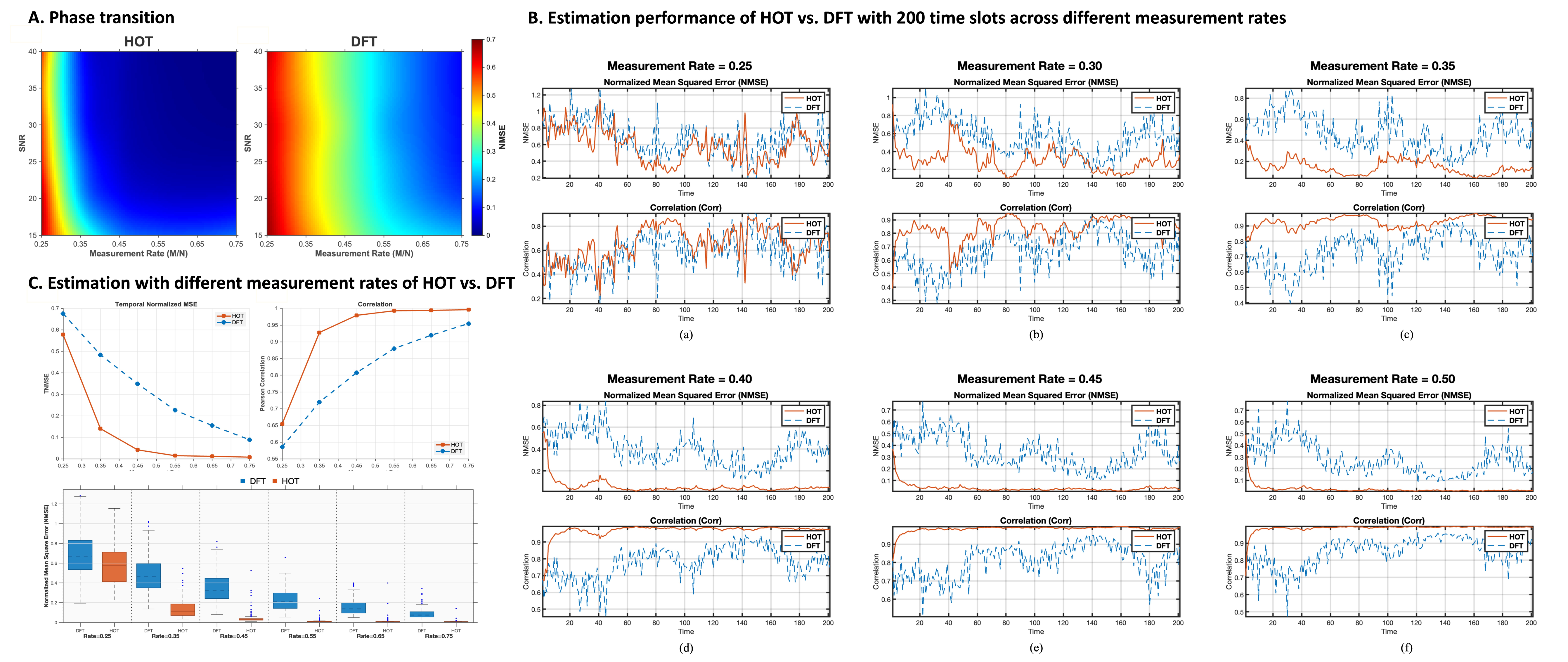}
	\vspace{-3mm}
	\caption{5G channel estimation with HOT}
	\label{fig8}
\end{figure}

Panel A of Figure \ref{fig8} presents the phase transition diagrams of the BP algorithm on the HOT and DFT domains under different measurement ratios and noise levels. Darker shades indicate better channel estimation performance under the corresponding measurement ratio and noise level, hence a larger blue region in the phase transition diagram reflects a stronger phase transition capability of the algorithm. As can be observed, using the same BP algorithm, the phase transition region on the HOT domain is three times larger than that on the DFT domain, elevating the solver from a naive baseline to a state-of-the-art level. Panel B further details, as the measurement ratio varies from 0.25 to 0.5, the temporal evolution of the relative error and correlation of channel recovery by the BP algorithm on the HOT and DFT domains at an SNR of 30 dB. In the extreme case with a measurement ratio of 0.25 ($M=16$), the relative error of the BP algorithm on the DFT domain remains around 0.7–0.8 and occasionally exceeds 1.0. Considering the inherent time-varying nature of the channel, the channel estimate from the previous time step serves as a highly inaccurate reference for the current channel state. Nevertheless, even under such conditions, the HOT transform constructed from this reference still provides a noticeable gain in channel estimation. This stems from the excellent generalization and specificity of HOT: when the reference information is highly inaccurate, HOT maintains sparsity after transformation due to its generalization ability comparable to that of DFT; whereas when the reference contains even a small amount of useful information, HOT exhibits superior specificity, thereby enhancing the recovery performance of compressed sensing. At a measurement ratio of 0.3 ($M=19$), the recovery error of the BP algorithm on the DFT domain remains around 0.5–0.8. Although the reference information is still coarse at this point, the BP algorithm in the HOT domain achieves a 70\% improvement compared to its performance on the DFT domain. As the measurement ratio increases further and the reference information becomes more accurate, this meta-gain can exceed 90\%. Panel C and Table \ref{tab1} further summarize the TNMSE and TCorr of channel estimation by the BP algorithm on the HOT and DFT domains under different measurement ratios.
\begin{table}[htbp]
	\centering
	\caption{Comparison on TNMSE, TCorr and CPU time between HOT and DFT}
	\label{tab1}
	\begin{tabular}{ccccc}
		\toprule
		Measurement Rate & Transform Domain & ~~~~~TNMSE~~~~~ & ~~~~~TCorr~~~~~ & ~~~~CPU\ time~~~~ \\
		\midrule
		\multirow{2}{*}{$M/N = 0.25$} & DFT & 0.6756 & 0.5857 & 0.6037 \\
		& HOT & \textbf{0.5776} & \textbf{0.6540} & \textbf{0.5324} \\
		\cline{2-5}
		\multirow{2}{*}{$M/N = 0.35$} & DFT & 0.4842 & 0.7196 & 0.7779 \\
		& HOT & \textbf{0.1405} & \textbf{0.9280} & \textbf{0.6646} \\
		\cline{2-5}
		\multirow{2}{*}{$M/N = 0.45$} & DFT & 0.3488 & 0.8079 & 0.9567 \\
		& HOT & \textbf{0.0416} & \textbf{0.9793} & \textbf{0.8371} \\
		\cline{2-5}
		\multirow{2}{*}{$M/N = 0.55$} & DFT & 0.2264 & 0.8798 & 1.1736 \\
		& HOT & \textbf{0.0146} & \textbf{0.9928} & \textbf{1.0550} \\
		\cline{2-5}
		\multirow{2}{*}{$M/N = 0.65$} & DFT & 0.1548 & 0.9198 & 1.3751 \\
		& HOT & \textbf{0.0116} & \textbf{0.9941} & \textbf{1.2319} \\
		\cline{2-5}
		\multirow{2}{*}{$M/N = 0.75$} & DFT & 0.0890 & 0.9546 & 1.6156 \\
		& HOT & \textbf{0.0078} & \textbf{0.9961} & \textbf{1.4708} \\
		\bottomrule
	\end{tabular}
\end{table}

The third column of Table \ref{tab1} records the execution time of the BP algorithm for 200 instances of channel estimation on both the HOT and DFT domains. The results indicate that the BP algorithm not only achieves significantly superior estimation performance on the HOT domain but also operates approximately 10\% faster than on the DFT domain. This improvement can be attributed to the enhanced sparsity of the channel representation on the HOT domain. Since compressed sensing converges faster on sparser targets, the BP algorithm requires fewer computational steps to reach a solution on the HOT domain. Consequently, the proposed approach demonstrates simultaneous improvements in both estimation accuracy and computational efficiency compared to the conventional DFT-based method.

\subsection{Image Compression with HOT}
In this subsection, we demonstrate how to construct HOT transform using an initial coarse estimate of the target to deliver significant meta-gains for image compression tasks. Two common schemes are widely adopted in image compression:

\begin{itemize}
	\item \textbf{Sparse Transform-Based Compression:} The image is first transformed into a sparse domain (e.g. DWT), where only a small number of nonzero components with high energy are retained and the rest are discarded. This enables efficient storage of the image. During reconstruction, the preserved sparse coefficients are mapped back to the image domain. A higher number of retained components generally leads to better reconstruction quality, albeit at the cost of increased storage.
	\item \textbf{Compressed Sensing-Based Reconstruction:} This approach directly utilizes a sensing matrix to acquire compressed measurements of the image. Reconstruction is then performed using CS recovery algorithms that leverage the inherent transform sparsity of the image.
\end{itemize}

\vspace{-2mm}
For the image compression task, we employ the classical Discrete Wavelet Transform (DWT), the foundation of the JPEG-2000 standard, as the prior transform domain for HOT. Evaluations are conducted on eight classic $128 \times 128$ grayscale images sourced from two publicly available datasets \footnote{Available at \url{http://dsp.rice.edu/software/DAMP-toolbox} and \url{http://see.xidian.edu.cn/faculty/wsdong/NLR_Exps.htm}.}. The column-wise mean across all images (a $128 \times 1$ vector) is used as coarse reference information. While this mean vector provides only a rough approximation of any individual image, we demonstrate that the constructed HOT transform consistently outperforms conventional DWT in both of the previously mentioned compression schemes. Experimental results are summarized in Figure \ref{fig9}.

\begin{figure}[h]
	\centering
	\includegraphics[width=1\linewidth]{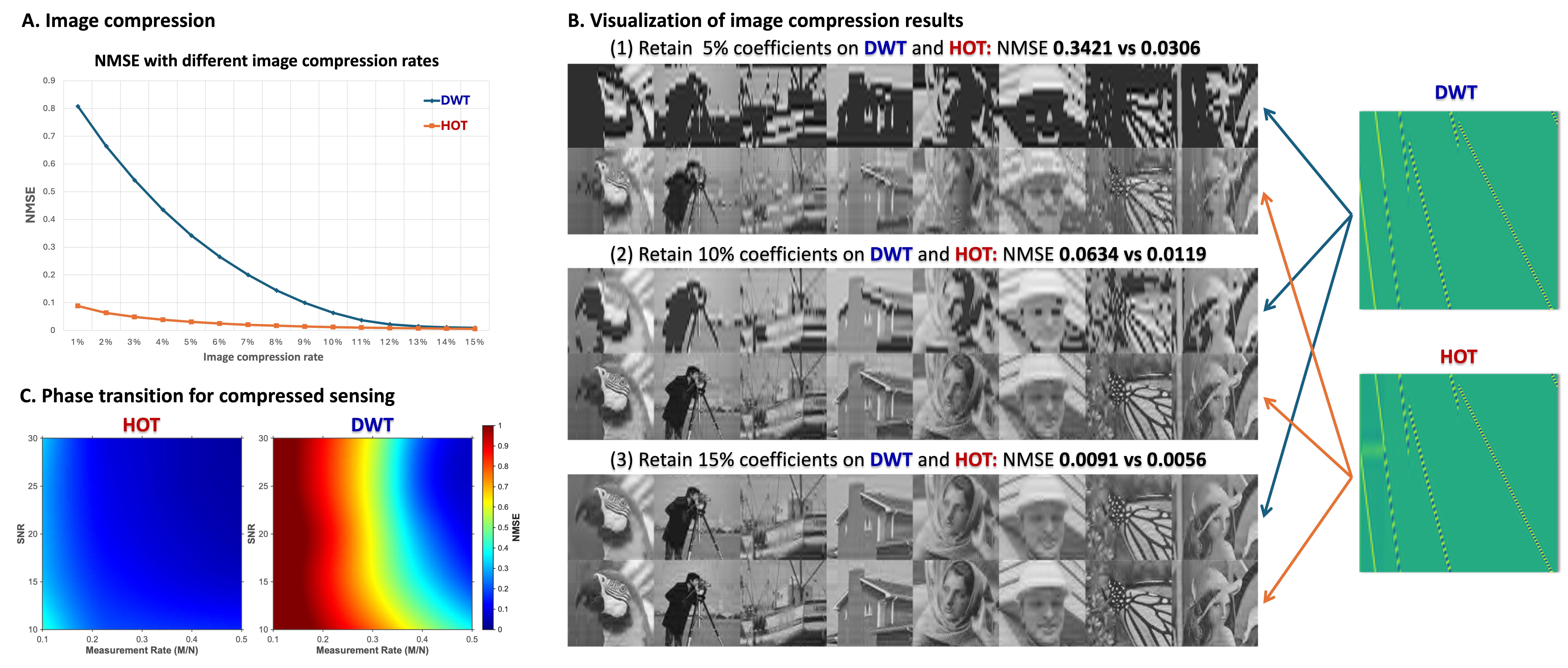}
	\vspace{-3mm}
	\caption{Image compression with HOT}
	\label{fig9}
\end{figure}

In Figure \ref{fig9}, Panels A and B illustrate the performance under the sparse transform-based compression scheme, while Panel C corresponds to the compressed sensing-based reconstruction scenario. Panel A displays the overall NMSE trend for image reconstruction when preserving only the top 1\% to 15\% of the highest-energy components in the HOT and DWT domains, respectively, across eight image sets. The results clearly indicate that for the same fraction of retained coefficients, the reconstruction error on the HOT domain is significantly lower than that on the DWT domain. Notably, when preserving up to 5\% of the components, the improvement exceeds 90\%. This implies that images can be compressed and stored at substantially lower cost on the HOT domain compared to the DWT domain. Panel B further provides visual comparisons of the reconstructed images when retaining 5\%, 10\%, and 15\% of the coefficients in each domain. Under the extreme compression scenario (5\% retention), the DWT reconstruction only captures rough outlines, whereas HOT already recovers most image content coherently. With 10\% retention, the visual quality achieved by DWT is slightly inferior to that of HOT with only 5\% retention, while HOT at 10\% yields clearly recognizable images. At 15\% retention, DWT produces results similar to HOT at 10\%, whereas HOT further enhances fine details such as facial features, background texture, and stripe patterns. The heatmaps on the right highlight that the difference between DWT and HOT domain is visually subtle, demonstrating that HOT achieves remarkable compression gains with minimal representational overhead. Panel C reports the gains of HOT in compressed sensing-based reconstruction. We randomly sampled from the eight image sets and conducted extensive reconstruction experiments under various measurement ratios and SNR levels. The experimental configuration is as follows:
\vspace{-2mm}
\begin{itemize}
	\item \textbf{Measurement Ratio:} The measurement ratio $M/N$ for image compressed sensing varies from 0.1 to 0.5.
	\item \textbf{Signal-to-Noise Ratio (SNR):} The SNR in the experiments ranges from 10 dB to 30 dB.
	\item \textbf{Baseline Algorithm:} We employ the classic compressed sensing method, LASSO, as the baseline algorithm for image reconstruction, evaluating its phase transition capabilities on both the HOT and DWT domains. The resulting phase transition diagram is shown in panel C of Figure \ref{fig9}.
\end{itemize}
\vspace{-2mm}
As observed in the figure, the same LASSO method achieves a phase transition region in the HOT domain that is four times larger than that in the DWT domain for image compressed sensing tasks. This further validates the effectiveness of HOT for compressed sensing-based reconstruction schemes.

Figure \ref{fig10} further demonstrates the critical importance of the prior transform domain. It compares image reconstruction results obtained by retaining the top 15\% of highest-energy components across three different transform domains: the Householder transform (HT) constructed from coarse reference information (equivalent to using the identity matrix as the prior transform domain), the conventional DWT, and HOT (with DWT as the prior).
\begin{figure}[h]
	\centering
	\includegraphics[width=1\linewidth]{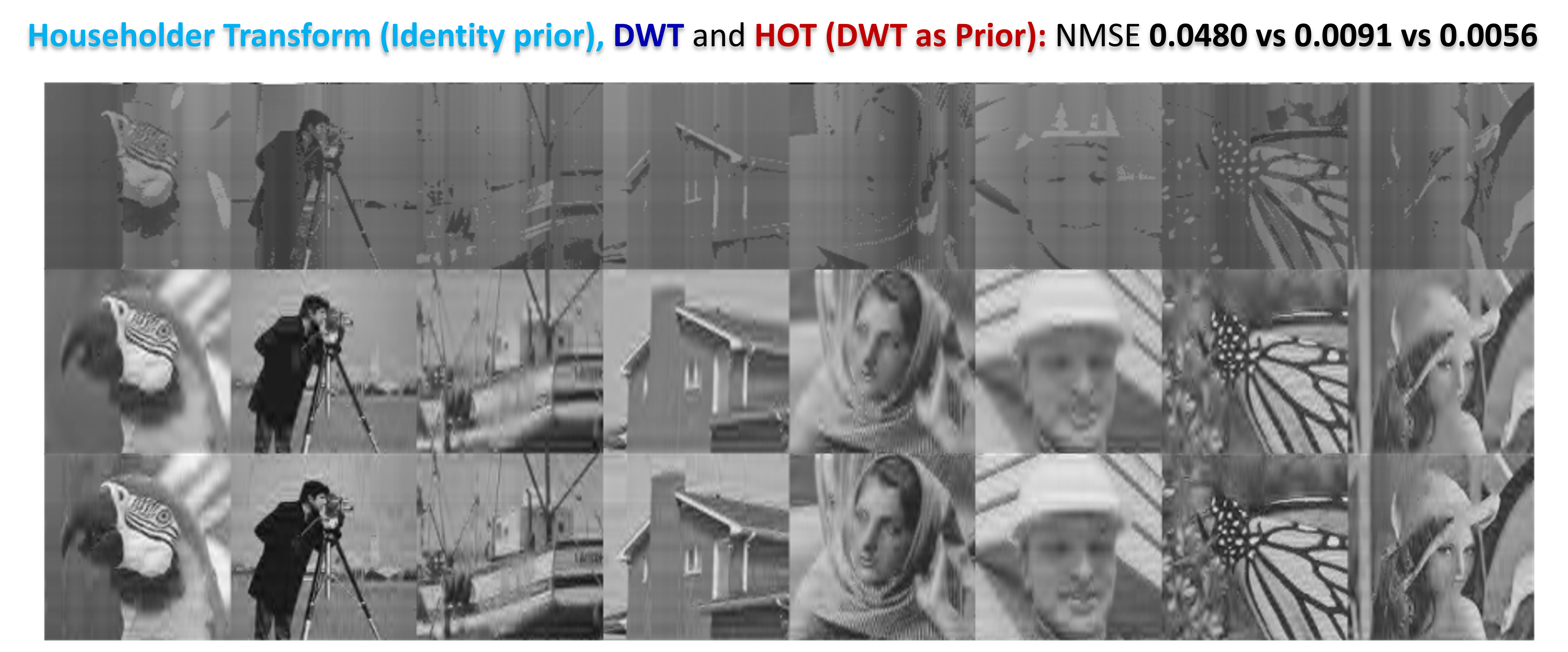}
	\vspace{-5mm}
	\caption{Importance of the Prior Transform Domain in HOT}
	\label{fig10}
\end{figure}

From the first row of Figure \ref{fig10}, it can be observed that due to its generalization properties resembling those of the identity matrix, HT struggles to achieve effective image compression when the reference information is inaccurate: the reconstructed images retain only partial content. In contrast, the third row of Figure \ref{fig10} illustrates that HOT effectively inherits the strong generalization capability of DWT while, thanks to the knowledge provided by the reference information, exhibits superior specificity, leading to significantly better performance in reconstructing fine image details. These findings further validate the theoretical framework established earlier.

\section{Conclusion}
This paper presented the Prior-to-Posterior Sparse Transform (POST) framework, a new paradigm for sparse representation that effectively resolves the long-standing trade-off between generalization and specificity in compressed sensing. Through systematic integration of any existing transform domains with task-specific reference knowledge, POST enables adaptive signal representation across diverse scenarios. The derived HOT transform demonstrates robust performance for both real and complex-valued signals, with theoretical guarantees under both single and multiple reference settings. Crucially, HOT maintains strong generalization while achieving significantly enhanced specificity even under limited reference accuracy.

Extensive experimental validation across audio sensing, 5G channel estimation, and image compression tasks confirms that HOT delivers consistent meta-gains for multiple reconstruction algorithms in multimodal scenarios, all with negligible computational overhead and even less computation time. These findings position the POST framework and HOT transform as a versatile and efficient solution for advanced compressed sensing and a broader class of transform-dependent tasks. Future works include exploration on alternative formulations and objective functions within the POST framework, as well as extensions to potential domains such as tensor-based signal processing and machine learning.



\bibliography{scibib}

\bibliographystyle{IEEEtran}

\section*{Acknowledgments}
This research was supported by National Key R\&D Program of China under grant 2021YFA1003303.



\end{document}